\newtheorem{thm}{Theorem}[section]
\newtheorem{proposition}[thm]{Proposition}
\newtheorem{lemma}[thm]{Lemma}
\newtheorem{lemdef}[thm]{Lemma/Definition}
\newtheorem{corollary}[thm]{Corollary}
\newtheorem{conjecture}[thm]{Conjecture}
\newtheorem{definition}[thm]{Definition}
\newtheorem{remark}[thm]{Remark}
\newtheorem{example}[thm]{Example}
\newcommand{\newword}[1]{\textbf{\emph{#1}}}
\newcommand{\sh}{\mathrm{sh}}
\newcommand{\rectify}{\mathrm{rect}}
\newcommand{\esh}{\mathrm{esh}}
\newcommand{\lesh}{\mathrm{local\text{-}esh}}
\newcommand{\eset}{\varnothing}
\newcommand{\rect}{{\scalebox{.3}{\yng(3,3)}}}
\newcommand{\x}{\times}
\newcommand{\ybox}{\boxtimes}
\newcommand{\pieri}{{\bf \mathrm{Pieri}}}
\newcommand{\vertical}{{\bf \mathrm{Vert}}}
\newcommand{\horiz}{{\bf \mathrm{Horiz}}}
\newcommand{\cpieri}{{\bf \mathrm{CPieri}}}
\newcommand{\LR}{\mathrm{LR}}
\newcommand{\LRyb}{\LR(\alpha,{\scalebox{.5}{\yng(1)}},\beta, \gamma)}
\newcommand{\LRby}{\LR(\alpha,\beta,{\scalebox{.5}{\yng(1)}}, \gamma)}
\newcommand{\Kabg}{K(\gamma^c/\alpha; \beta)}
\author{Maria Monks Gillespie\addressmark{1}\thanks{Email: \email{monks@math.berkeley.edu}.  Supported by NSF GRFP and Hertz Foundation.}
  \and Jake Levinson\addressmark{2}\thanks{Email: \email{jakelev@umich.edu}. Supported by FRQNT.}}
\title[Monodromy of Schubert Curves]{Monodromy and $K$-theory of Schubert curves\\
  via generalized jeu de taquin}
\address{\addressmark{1}University of California, Berkeley, CA.\\
  \addressmark{2}University of Michigan, Ann Arbor, MI.}
\keywords{Young tableaux, monodromy, Schubert calculus, $K$-theory, osculating flag, jeu de taquin}
\begin{document}
\maketitle
\begin{abstract}
\paragraph{Abstract.} 
We establish a combinatorial connection between the real geometry and the $K$-theory of complex \emph{Schubert curves} $S(\lambda_\bullet)$, which are one-dimensional Schubert problems defined with respect to flags osculating the rational normal curve.  In a previous paper, the second author showed that the real geometry of these curves is described by the orbits of a map $\omega$ on skew tableaux, defined as the commutator of jeu de taquin rectification and promotion.  In particular, the real locus of the Schubert curve is naturally a covering space of $\mathbb{RP}^1$, with $\omega$ as the monodromy operator.

We provide a fast, local algorithm for computing $\omega$ without rectifying the skew tableau, and show that certain steps in our algorithm are in bijective correspondence with Pechenik and Yong's \emph{genomic tableaux}, which enumerate the $K$-theoretic Littlewood-Richardson coefficient associated to the Schubert curve. Using this bijection, we give purely combinatorial proofs of several numerical results involving the $K$-theory and real geometry of $S(\lambda_\bullet)$.

\paragraph{R\'esum\'e.} Nous \'etablissons une connection entre la g\'eom\'etrie r\'eelle et la K-th\'eorie des \emph{courbes de Schubert} $\mathcal{S}(\lambda_\bullet)$. Ces derni\`eres sont des probl\`emes de Schubert, de dimension $1$, d\'efinies par rapport \`a des drapeaux tangents \`a la courbe rationelle normale. Le deuxi\`eme auteur a d\'emontr\'e auparavant que la g\'eom\'etrie de ces courbes est d\'ecrite par les orbites d`une transformation $\omega$ de tableaux de Young gauches : le commutateur de la rectification (au sens du \emph{jeu de taquin} de Sch\"utzenberger) et de la promotion. Les points r\'eels de $\mathcal{S}(\lambda_\bullet)$ forment alors un rev\^etement de $\mathbb{RP}^1$ avec $\omega$ comme op\'erateur de monodromie.

Nous introduisons un algorithme local et rapide qui permet de calculer $\omega$ sans devoir rectifier le tableau. Nous d\'emontrons ensuite que certaines \'etapes de l'algorithme sont en bijection avec les \emph{tableaux g\'enomiques} de Pechenik-Yong, lesquels \'enum\`erent le coefficient de Littlewood-Richardson K-th\'eorique associ\'e \`a $\mathcal{S}(\lambda_\bullet)$. Finalement, nous d\'emontrons de fa\c{c}on purement combinatoire certaines propri\'et\'es g\'eom\'etriques et K-th\'eoriques de $\mathcal{S}(\lambda_\bullet)$.
\end{abstract}

\section{Introduction}\label{sec:introduction}
%%%%%%%%%%%%%%%%%%%%%%%%%
%%%%% INTRODUCTION  %%%%%
%%%%%%%%%%%%%%%%%%%%%%%%%

This paper is an abridged version of the full paper \cite{bib:GillespieLevinson} by the authors.  We study the real and complex geometry of certain one-dimensional intersections $S$ of Schubert varieties defined with respect to `osculating' flags.  These curves are known to have smooth real points, which naturally cover the circle $\mathbb{RP}^1$; we give a new combinatorial rule, in terms of certain Young tableaux, for the monodromy operator on the fibers (a more complicated rule was given in \cite{bib:Levinson}). Our rule is fast and combinatorially `local', making it easier to count $\eta(S)$, the number of components of $S(\mathbb{R})$, which fully characterizes the real topology.  Moreover, our rule computes the class of $S$ in the $K$-theory of the Grassmannian: it explicitly produces Pechenik and Yong's \emph{genomic tableaux} \cite{bib:Pechenik}. This connection gives rise to purely combinatorial proofs of two known geometric relations between $\eta(S)$ and the Euler characteristic $\chi(\mathcal{O}_S)$, and also yields new facts about the real and complex geometry of $S$.

To define the curve $S$, recall first that the \emph{rational normal curve} is the image of the embedding $\mathbb{P}^1 \hookrightarrow \mathbb{P}^{n-1} = \mathbb{P}(\mathbb{C}^n)$ by the map
\[t \mapsto [1 : t : t^2 : \cdots : t^{n-1}].\]
Let $\mathcal{F}_t$ be the \emph{osculating} or \emph{maximally tangent flag} to this curve at $t \in \mathbb{P}^1$,  i.e. the complete flag in $\mathbb{C}^n$ formed by the iterated derivatives of this map. Let $G(k,\mathbb{C}^n)$ be the Grassmannian, and $\Omega(\lambda,\mathcal{F}_t)$ the Schubert variety for the condition $\lambda$ with respect to $\mathcal{F}_t$. The \newword{Schubert curve} is the intersection \[S = S(\lambda_1, \ldots, \lambda_r) = \Omega(\lambda_1, \mathcal{F}_{t_1}) \cap \cdots \cap \Omega(\lambda_r, \mathcal{F}_{t_r}),\]
where the osculation points $t_i$ are real numbers with $0 = t_1 < t_2 < \cdots < t_r = \infty$, and $\lambda_1,\ldots,\lambda_r$ are partitions for which $\sum |\lambda_i|=k(n-k)-1$.   For simplicity, we always consider intersections of only three Schubert varieties, though the results of this paper (in particular, Theorems \ref{thm:intro-2}, \ref{thm:MainResult2} and \ref{thm:intro-parity}) extend to the general case without difficulty.  With this in mind, we let $\alpha, \beta, \gamma$ be partitions with $|\alpha| + |\beta| + |\gamma| = k(n-k) - 1$, and we consider the Schubert curve
\[S(\alpha,\beta,\gamma) = \Omega(\alpha,\mathcal{F}_0) \cap \Omega(\beta,\mathcal{F}_1) \cap \Omega(\gamma,\mathcal{F}_\infty).\]

Schubert varieties with respect to such osculating flags have been studied extensively in the context of degenerations of curves \cite{bib:Chan} \cite{bib:EH86} \cite{bib:Oss06}, Schubert calculus and the Shapiro-Shapiro Conjecture \cite{bib:MTV09} \cite{bib:Pur13} \cite{bib:Sot10}, and the geometry of the moduli space $\overline{M_{0,r}}(\mathbb{R})$ \cite{bib:Speyer}. They satisfy unusually strong transversality properties, particularly when the osculation points $t$ are chosen to be real \cite{bib:EH86} \cite{bib:MTV09}; in particular, $S$ is known to be one-dimensional (if nonempty) and reduced \cite{bib:Levinson}. Moreover, intersections of such Schubert varieties in dimensions zero and one have been found to have remarkable topological descriptions in terms of Young tableau combinatorics. \cite{bib:Chan} \cite{bib:Levinson} \cite{bib:Pur10} \cite{bib:Speyer}

The Schubert curve is no exception: recent work (\cite{bib:Levinson}) has shown that its \emph{real} connected components can be described by combinatorial operations, related to jeu de taquin and Sch\"{u}tzenberger's promotion and evacuation,  on chains of skew Young tableaux. Recall that a skew semistandard Young tableau is \emph{Littlewood-Richardson} if its reading word is \emph{ballot}, meaning that every suffix of the reading word has partition content.   
  
 \begin{definition}\label{def:chains}
  We write $\mathrm{LR}(\lambda_1,\ldots,\lambda_r)$ to denote the set of sequences $(T_1, \ldots, T_r)$ of skew Littlewood-Richardson tableaux, filling a $k\times (n-k)$ rectangle, such that the shape of $T_i$ extends that of $T_{i-1}$ and $T_i$ has content $\lambda_i$ for all $i$. (The tableaux $T_1$ and $T_r$ are uniquely determined and may be omitted.)
\end{definition}

The theorem below describes the topology of $S(\alpha,\beta,\gamma)(\mathbb{R})$ in terms of tableaux:

\begin{thm}[\cite{bib:Levinson}, Corollary 4.9]\label{thm:intro-2}
There is a map $S \to \mathbb{P}^1$ that makes the real locus $S(\mathbb{R})$ a smooth covering of the circle $\mathbb{RP}^1$. The fibers over $0$ and $\infty$ are in canonical bijection with, respectively, $\LRyb$ and $\LRby$. Under this identification, the arcs of $S(\mathbb{R})$ covering $\mathbb{R}_-$ induce the \emph{jeu de taquin bijection}
\[\sh : \LRby \to \LRyb,\]
and the arcs covering $\mathbb{R}_+$ induce a different bijection $\esh$, called \emph{evacuation-shuffling}. The monodromy operator $\omega$ is, therefore, given by $\omega = \sh \circ \esh.$
%\begin{itemize}
%\item 
%\item The arcs of $S(\mathbb{R})$ covering $\mathbb{R}_+$ induce a different bijection, $\esh$, called \emph{evacuation shuffling}.
%\end{itemize}
\end{thm}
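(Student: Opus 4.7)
The strategy is to exhibit a natural map $\pi\colon S \to \mathbb{P}^1$, use real transversality to make its real locus a smooth cover of $\mathbb{RP}^1$, identify the fibers over $0$ and $\infty$ combinatorially, and then track how the tableau labels transform as we traverse each arc of $\mathbb{RP}^1$. The map of choice is the \emph{Wronskian}: for $V \in G(k,\mathbb{C}^n)$, the Wronskian $W(V)(t)$ is a polynomial of degree $k(n-k)$ whose order of vanishing at $t = t_0$ equals $|\lambda|$ precisely when $V \in \Omega(\lambda,\mathcal{F}_{t_0})$. For $V \in S(\alpha,\beta,\gamma)$ this forces vanishing of total order $|\alpha|+|\beta|+|\gamma|=k(n-k)-1$ at $\{0,1,\infty\}$, leaving one ``extra'' root $\pi(V) \in \mathbb{P}^1$. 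For $t \notin \{0,1,\infty\}$ the fiber $\pi^{-1}(t)$ is the zero-dimensional intersection
\[\Omega(\alpha,\mathcal{F}_0)\cap\Omega(\ebox,\mathcal{F}_t)\cap\Omega(\beta,\mathcal{F}_1)\cap\Omega(\gamma,\mathcal{F}_\infty).\]

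Second, I would invoke the Mukhin--Tarasov--Varchenko theorem: at real, distinct osculation points, the Schubert intersection is reduced and transverse. Applied to the fibers of $\pi$, this shows $S(\mathbb{R}) \to \mathbb{RP}^1 \setminus \{0,1,\infty\}$ is an \'etale cover, and a short local analysis at the three special values extends this to a smooth topological covering $S(\mathbb{R}) \to \mathbb{RP}^1$. The fiber over $t=0$ is a zero-dimensional intersection in which the extra box has collided with the $\alpha$-condition, and by the standard correspondence between points of such intersections and Littlewood--Richardson chains (as developed by Speyer and by MTV), its points are canonically labeled by $\LRyb$; the fiber over $t=\infty$ is similarly labeled by $\LRby$.

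The main obstacle is identifying the two arc monodromies with $\sh$ and $\esh$. On the arc $\mathbb{R}_-$, the parameter avoids the third special value $t=1$, so each intersection point can be followed continuously from the fiber over $\infty$ to the fiber over $0$; the task is then to show that the induced bijection $\LRby \to \LRyb$ agrees with the \emph{jeu de taquin} slide $\sh$. I would approach this by degenerating the family as $t\to 0^-$ and matching the local analytic picture against the combinatorial slide, leveraging the way a shrinking box degenerates into one of the adjacent LR tableaux. On the arc $\mathbb{R}_+$ the parameter must cross $t=1$, where the extra-box flag momentarily coincides with $\mathcal{F}_1$ and the fiber becomes non-generic; the resulting monodromy, $\esh$, encodes the intricate rearrangement of tableau data across this collision, and the crux of the proof is a careful local analysis of the Schubert family at $t=1$ showing that the braid-like exchange across this critical value matches a Sch\"utzenberger-type evacuation composed with the slide. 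Once both arcs are understood, the identity $\omega = \sh \circ \esh$ follows by concatenating the two arc monodromies into the full loop around $\mathbb{RP}^1$ based at $0$.
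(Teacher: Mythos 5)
This paper does not prove Theorem \ref{thm:intro-2}: it is imported verbatim from \cite{bib:Levinson}, Corollary 4.9. Measured against the proof in that source, your outline has the correct architecture. The Wronskian does give the map $S \to \mathbb{P}^1$ (though you should say it vanishes to order \emph{at least} $|\lambda|$ at $t_0$ iff $V \in \Omega(\lambda,\mathcal{F}_{t_0})$, so that the three conditions leave exactly one residual root), the Mukhin--Tarasov--Varchenko transversality theorem is the right input for reducedness of the real fibers, and the fibers over $0,1,\infty$ are indeed labeled by LR chains via degeneration, giving $\LRyb$ and $\LRby$.

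The gap is that the two claims carrying essentially all of the content --- that transport along $\mathbb{R}_-$ is $\sh$ and transport along $\mathbb{R}_+$ is $\esh$ --- are announced as tasks rather than argued, and neither yields to an ad hoc ``local analytic matching.'' The $\mathbb{R}_-$ statement is Speyer's theorem on the $\overline{M_{0,r}}$ compactification of osculating Schubert problems (equivalently, Purbhoo's jeu de taquin description of Wronskian monodromy), which must be invoked or reproved. The $\mathbb{R}_+$ statement needs (i) the same machinery to label the degenerate fiber over $t=1$ by chains in which the $\ybox$ and $\beta$ have merged into a single tableau of content $\beta^+$, and (ii) the comparison of the two labelings of that fiber obtained from $t \to 1^-$ and $t \to 1^+$, which is exactly where the rectify--promote--unrectify formula for $\esh$ comes from (as recalled in Section \ref{sec:shuffling-ops}); your gloss ``a Sch\"{u}tzenberger-type evacuation composed with the slide'' is not that formula, and in \cite{bib:Levinson} the operator $\esh$ is essentially \emph{defined} by this geometric transport before being computed. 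Finally, extending the covering smoothly across $0,1,\infty$ is not a formality: it uses that the degenerate real fibers remain reduced and that their cardinalities equal the generic fiber count, e.g. $|\LRyb| = \sum_{\beta^+} |\LR(\alpha,\beta^+,\gamma)|$. In short: right skeleton, but the theorem's substance lies precisely in the steps you have deferred.
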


\vspace{-0.2cm}

\begin{figure}[hb]
\centering
\qquad \includegraphics[scale=0.57]{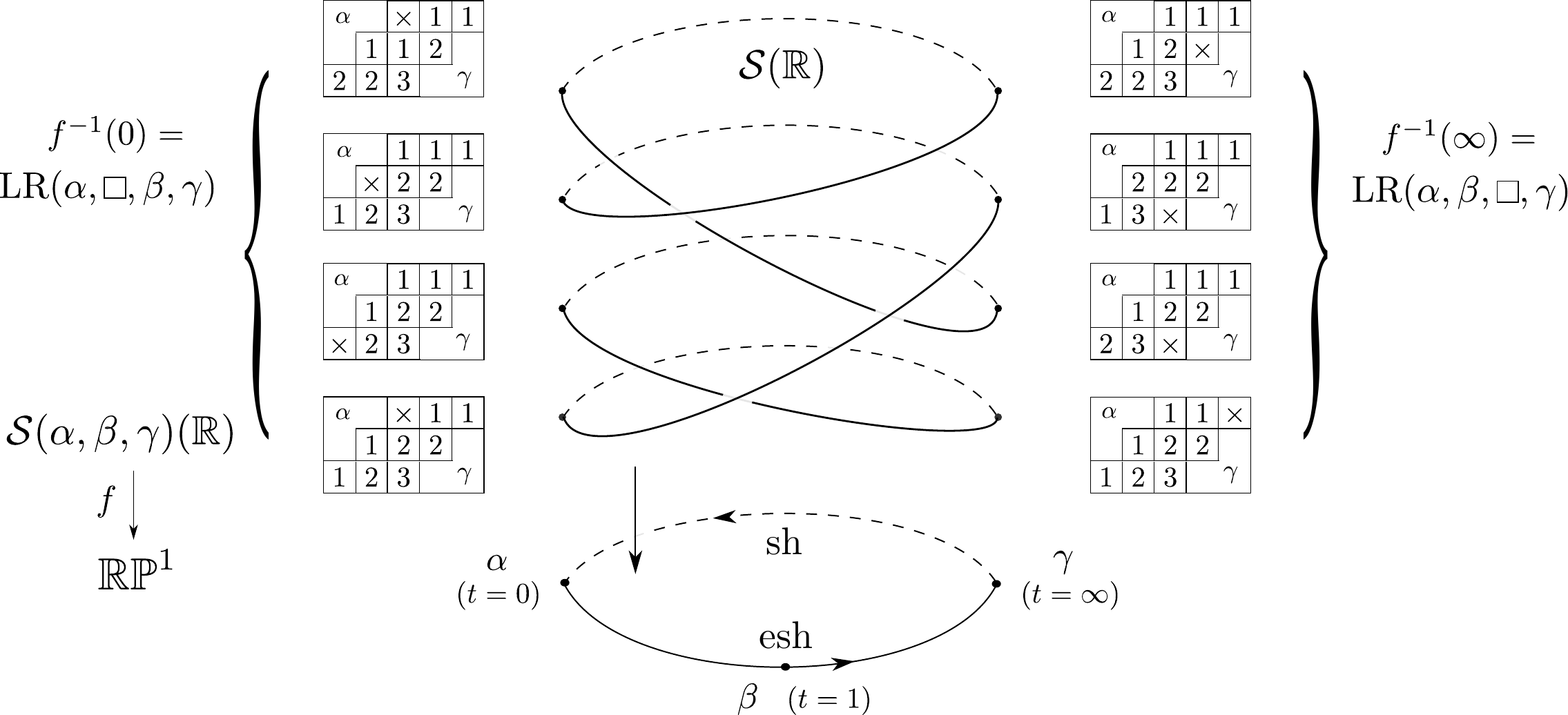}
\vspace{-0.2cm}
\caption{An example of the covering space of Theorem \ref{thm:intro-2}. The fibers over $0$ and $\infty$ are indexed by chains of tableaux, with $\ybox$ denoting the single box. The dashed arcs correspond to sliding the $\ybox$ using jeu de taquin. The monodromy operator is $\omega = \sh \circ \esh$.}
\label{fig:covering-space}
\end{figure}

\vspace{-0.25cm}

The operators $\esh$ and $\omega$ are our objects of study. In \cite{bib:Levinson}, the second author described $\esh$ as the conjugation of jeu de taquin \emph{promotion} by \emph{rectification} (see Section \ref{sec:background} for a precise definition). Variants of this operation have appeared elsewhere in \cite{bib:BerKir}, \cite{bib:HenrKam}, \cite{bib:KirBer}.

%Note that $\omega$ is not a commutator in the sense of group theory, since rectification and promotion are maps between different sets of tableaux. Indeed, $\omega$ may be an odd permutation; computing the sign of $\omega$ combinatorially and relating it to the $K$-theory of the Grassmannian is one of our main applications.  We return to this consideration below.

We prove two main theorems. The first is a shorter, `local' combinatorial description of the map $\esh$, which no longer requires rectifying or otherwise modifying the skew shape. We call our algorithm \emph{local evacuation shuffling}. Local evacuation-shuffling resembles jeu de taquin: it consists of successively moving the $\ybox$ through $T$ through a weakly increasing sequence of squares. Unlike JDT, the path is in general disconnected. (See Section \ref{sec:local-esh} for the definition, and Figure \ref{fig:antidiagonal} for a visual description of the path of the $\ybox$.)

\begin{figure}[t]
\centering
\includegraphics[scale=0.7]{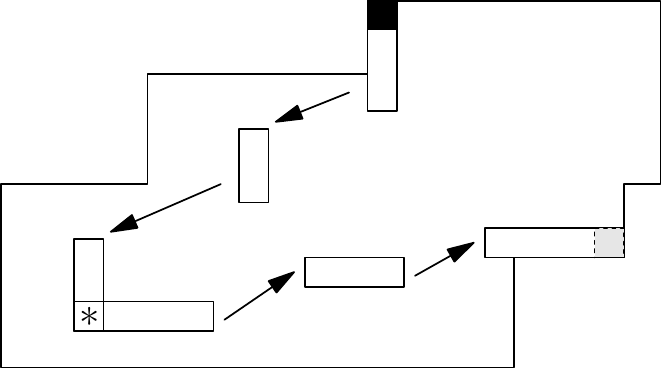}
\caption{The path of the $\ybox$ in a local evacuation-shuffle.  The black and gray squares are the initial and final locations of the $\ybox$; the algorithm switched from ``Phase 1'' to ``Phase 2'' at the square marked by a $*$. There is an \emph{antidiagonal symmetry}: the Phase 1 path forms a vertical strip, while the Phase 2 path forms a horizontal strip. We give a precise statement of this symmetry in the full paper.}
\label{fig:antidiagonal}
\end{figure}

\begin{thm}\label{thm:MainResult1}
  The map $\esh$ agrees with local evacuation shuffling. In particular, $\omega = \sh \circ \lesh$.
\end{thm}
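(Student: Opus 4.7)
The plan is to verify that $\esh$, defined in \cite{bib:Levinson} as jeu de taquin promotion conjugated by rectification, agrees step-by-step with the local algorithm $\lesh$. The strategy is to track the motion of the $\ybox$ through each elementary stage of promotion in the rectified picture, and then transfer the resulting sequence of cells back to the original skew shape via a controlled inversion of rectification.

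First I would make the rectification correspondence explicit. After rectifying a chain $(T_1,\ybox,T_2,T_3) \in \LRyb$, the three tableaux fill nested staircase-like regions inside the rectangle, with the $\ybox$ appearing as a single distinguished cell between $T_1$ and $T_2$. Promotion in this rectified picture moves this cell through a well-understood, cyclic sequence of jeu de taquin slides. Each such slide corresponds, via the reverse of the original rectification moves, to a particular local update on the unrectified skew chain, and the new position of the $\ybox$ maps back to a specific cell of the original shape.

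Second, I would invoke a Fomin-style slide-commutation argument. The key identity to prove is that ``rectify, slide the $\ybox$, then unrectify'' can be performed as a single local slide on the skew chain without actually passing through the rectified picture, provided one keeps track of both the $\ybox$ and the LR reading word structure. With this in hand, the cells visited by the $\ybox$ during promotion pull back to a weakly increasing sequence of cells in the skew shape, which splits naturally into an initial vertical strip (Phase 1) and a final horizontal strip (Phase 2), with the transition occurring when the $\ybox$ crosses the pivot square $*$ visible in Figure~\ref{fig:antidiagonal}. The antidiagonal symmetry between the two phases then reflects the well-known symmetry between promotion and evacuation on rectangular shapes.

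Finally I would verify that the two-phase sequence obtained this way matches the local rule $\lesh$ of Section~\ref{sec:local-esh} on the nose, including the precise criterion for when to switch phases. I expect the main obstacle to be the slide-commutation step: rectification is a long composition of slides and only its rectangular output is canonical, so proving that pulling promotion back through every intermediate stage yields a single, well-defined local move on the skew chain requires careful bookkeeping of both the cells visited and the ballot conditions maintained at each step. An induction on the number of slides in the rectification, together with an analysis of how individual inner-corner slides interact with the motion of the $\ybox$, should suffice.
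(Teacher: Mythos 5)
There is a genuine gap at the heart of your plan: the ``slide-commutation'' step, as you state it, is false at the level of individual slides. A single jeu de taquin slide of the $\ybox$ in the rectified picture does \emph{not} pull back to a single local move on the skew tableau, and the moves of $\lesh$ are not local slides at all --- a Phase 1 Pieri jump can carry the $\ybox$ a long way down-and-left in one step. An induction on the number of slides in the rectification therefore has no correct base/inductive statement to prove: the cell-by-cell pullback of the $\ybox$'s promotion path is simply not well-defined, since only the rectified output of jeu de taquin is canonical, not the intermediate configurations. The paper's proof works at a coarser granularity: after rectification the shape of $\ybox \sqcup R$ is $\beta$ plus an outer corner in some row $s$, and the promotion path (straight down to row $s$, then straight right) is decomposed via the \emph{$s$-decomposition} into $s-1$ horizontal strips followed by $\beta_s$ vertical strips. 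Shuffling past each strip is then governed by the Pieri case (Theorem \ref{thm:Pieri-new}) and its conjugate, and it is these strip-level shuffles --- not individual slides --- that match the steps of $\lesh$, with the transition step identified as $s$. Your proposal never invokes the Pieri case, which is the essential base case of the whole argument.

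Two further points. First, the device that makes ``unrectify'' canonical and trackable is Haiman's theory of dual equivalence classes; without it (or an equivalent substitute) you cannot justify transferring the path back to the skew shape. Second, the antidiagonal symmetry you attribute to ``the well-known symmetry between promotion and evacuation on rectangular shapes'' is not the relevant mechanism: in the paper it arises because dual equivalence permits computing $\esh$ by \emph{outward} rather than inward rectification, which transposes the roles of horizontal and vertical strips and yields the Phase 2 rule from the Phase 1 rule. Finally, your criterion for the phase transition (``when the $\ybox$ crosses the pivot square'') is circular; the actual criterion --- the $\ybox$ precedes all remaining $i$'s in reading order --- must be derived from the position of the added corner in row $s$ of the rectification.
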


Our second main result is related to K-theory $K(G(k,\mathbb{C}^n))$ and the orbit structure of $\omega$. We first recall a key consequence of Theorem \ref{thm:intro-2}:

\begin{proposition}[\cite{bib:Levinson}, Lemma 5.6] \label{prop:numerics}
Let $S$ have $\iota(S)$ irreducible components and let $S(\mathbb{R})$ have $\eta(S)$ connected components. Let $\chi(\mathcal{O}_S)$ be the holomorphic Euler characteristic. Then
\begin{align*}
\eta(S) &\geq \iota(S) \geq \chi(\mathcal{O}_S) \text{ and } \\
\eta(S) &= \chi(O_S) \pmod 2.
\end{align*}
\end{proposition}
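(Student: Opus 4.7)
The plan is to combine the Mukhin-Tarasov-Varchenko (MTV) reality theorem with a normalization computation of $\chi(\mathcal{O}_S)$ and the Klein-Weichhold classification of real algebraic curves.

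For the first inequality, I would apply MTV to the projection $\pi: S \to \mathbb{P}^1$ from Theorem~\ref{thm:intro-2}: its fibers over real points are zero-dimensional Schubert intersections at real osculating flags, so by MTV they consist entirely of real points. This forces every complex irreducible component $S_j$ of $S$ to be defined over $\mathbb{R}$---otherwise $S_j$ and its complex conjugate would contribute non-real points in a real fiber. Since each $S_j$ is a projective curve surjecting onto $\mathbb{P}^1$, $S_j(\mathbb{R})$ is a nonempty smooth cover of $\mathbb{RP}^1$, hence a disjoint union of $\eta_j \geq 1$ smooth circles, giving $\eta(S) \geq \iota(S)$. For the second inequality, use the normalization $\nu: \tilde S = \bigsqcup_j \tilde S_j \to S$: from $0 \to \mathcal{O}_S \to \nu_*\mathcal{O}_{\tilde S} \to \mathcal{F} \to 0$ one reads
\[
  \chi(\mathcal{O}_S) = \sum_j(1 - g_j) - \delta(S) \leq \iota(S),
\]
with $g_j = g(\tilde S_j)$ and $\delta(S) = \dim_{\mathbb{C}}\mathcal{F}$ the total $\delta$-invariant of the singularities of $S$.

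For the parity, each $\tilde\pi_j: \tilde S_j \to \mathbb{P}^1$ is branched only over the real points $\{0,1,\infty\}$, so the quotient $\tilde S_j(\mathbb{C})/\mathrm{conj}$ is a branched cover of the closed disk $\mathbb{P}^1(\mathbb{C})/\mathrm{conj}$ ramified only on its boundary, hence an orientable surface with boundary. Equivalently, $\tilde S_j$ is a \emph{dividing} (Type~I) real curve, and Klein's theorem yields $\eta_j \equiv g_j + 1 \pmod 2$. Combining with the normalization formula,
\[
  \eta(S) - \chi(\mathcal{O}_S) = \sum_j(\eta_j - 1 + g_j) + \delta(S) \equiv \delta(S) \pmod 2.
\]

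The main obstacle will be to verify $\delta(S) \equiv 0 \pmod 2$. My plan is to show all singularities of $S$ occur in complex-conjugate pairs $\{p,\bar p\}$, forcing $\delta_p = \delta_{\bar p}$ and hence an even total contribution. Smoothness of $S(\mathbb{R})$ as a $1$-manifold (Theorem~\ref{thm:intro-2}) immediately rules out real singularities that would appear as nodes, cusps, or isolated real points in $S(\mathbb{R})$. The delicate remaining case---a real point of $S$ with a single smooth real analytic branch coexisting with additional non-real analytic branches, which could have odd local $\delta$ while leaving the real topology smooth---must be excluded using the transversality theorems for Schubert intersections at real osculating flags \cite{bib:EH86,bib:MTV09}.
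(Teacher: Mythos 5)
First, note that the paper does not actually prove Proposition~\ref{prop:numerics}: it is imported verbatim from \cite{bib:Levinson}, Lemma 5.6. Your overall architecture (total reality of fibers over $\mathbb{RP}^1$ via Mukhin--Tarasov--Varchenko, the normalization sequence, and a dividing-curve parity argument) is in the spirit of the cited proof, and the first two inequalities go through essentially as you describe. However, there is one genuinely false step. You assert that $\tilde\pi_j:\tilde S_j\to\mathbb{P}^1$ is ``branched only over the real points $\{0,1,\infty\}$,'' and you derive orientability of $\tilde S_j(\mathbb{C})/\mathrm{conj}$ from that. The truth is closer to the opposite: MTV transversality makes every fiber over $\mathbb{RP}^1$ reduced, so $\pi$ is unramified over \emph{all} of $\mathbb{RP}^1$ (including $0,1,\infty$), and all ramification lies over non-real points. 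Indeed, if your claim held, then $\pi^{-1}$ of each open half-plane would be an unramified cover of a disk, hence a disjoint union of disks, forcing every component of $S(\mathbb{C})$ to have geometric genus $0$; this contradicts the staircase-ribbon examples of Section~\ref{sec:constructions}, where $S_t$ is integral of arithmetic genus $(t-1)(t-2)$ and possibly smooth. The dividing (Type~I) property you need is still true, but for a different reason: since every point of $S$ lying over $\mathbb{RP}^1$ is real, $S_j(\mathbb{C})\setminus S_j(\mathbb{R})=\left(S_j\cap\pi^{-1}(H^+)\right)\sqcup\left(S_j\cap\pi^{-1}(H^-)\right)$ is disconnected (both pieces are nonempty because $\pi|_{S_j}$ is finite surjective), and this separation persists on the normalization. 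That disconnection is exactly the dividing condition, and Klein's congruence $\eta_j\equiv g_j+1\pmod 2$ then applies as you intend.

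The obstacle you flag at the end, namely $\delta(S)\equiv 0\pmod 2$, is a real gap in the write-up but is closable by the same transversality input, and you should also note that it is needed earlier (to know $\eta(S)=\sum_j\eta_j$, i.e., that distinct components have disjoint real loci). Concretely: $S$ is reduced, hence Cohen--Macaulay, so the finite map $\pi$ is flat; reduced fibers over $\mathbb{RP}^1$ then make $\pi$ \'etale over $\mathbb{RP}^1$, so $S$ is smooth (as a scheme, not merely as a real $1$-manifold) at every point of $\pi^{-1}(\mathbb{RP}^1)=S(\mathbb{R})$. This rules out your ``delicate remaining case'' of a real point with one real branch and auxiliary conjugate branches. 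Consequently all singular points of $S$ are non-real, they come in conjugate pairs with equal local $\delta$-invariants, and $\delta(S)$ is even, completing the congruence $\eta(S)\equiv\chi(\mathcal{O}_S)\pmod 2$.
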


We note that $\eta(S)$ is the number of orbits of $\omega$, viewed as a permutation of $\LRyb$. The numerical consequences above are most interesting in the context of $K$-theoretic Schubert calculus, which expresses $\chi(\mathcal{O}_S)$ in terms of both of ordinary and $K$-theoretic Young tableaux, namely
\[\chi(\mathcal{O}_S) = |\LRyb| - |\Kabg|.\]
See Section \ref{sec:K-theory} for the definition of $K(\gamma^c/\alpha; \beta)$, due to Pechenik-Yong \cite{bib:Pechenik}. In particular, we see that
\begin{align} \label{eqn:ktheory-ineq}
|\Kabg| &\geq |\LRby| - |\mathrm{orbits}(\omega)|, \text{ and} \\
\label{eqn:ktheory-mod2}
|\Kabg| &= |\LRby| - |\mathrm{orbits}(\omega)| \pmod 2.
\end{align}
Equivalently, we see that
\begin{align*}
|\Kabg| &\geq \mathrm{rlength}(\omega), \text{ and} \\
|\Kabg| &= \mathrm{sgn}(\omega) \pmod 2.
\end{align*}
where the sign is 0 or 1 and $\mathrm{rlength}(\omega)$ denotes the \newword{reflection length}, the minimum length of a factorization of $\omega$ as a product of transpositions (permutations consisting of a single $2$-cycle).

For the case where $\beta$ is a horizontal strip, a combinatorial interpretation of these facts was given in \cite{bib:Levinson}, indexing certain steps of an orbit by K-theoretic tableaux. Our second main result generalizes this combinatorial interpretation, showing that certain steps of local evacuation-shuffling correspond bijectively to the $K$-theoretic tableaux $\Kabg$:

\begin{thm}\label{thm:MainResult2}
As $T$ ranges over $\LRyb$, for either phase of the local description of $\esh(T)$, the gaps in the $\ybox$ path are in bijection with the set $K(\gamma^c/\alpha;\beta)$.
\end{thm}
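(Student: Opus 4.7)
The plan is to prove the Phase 1 statement directly, then deduce the Phase 2 statement from the antidiagonal symmetry of local evacuation-shuffling alluded to in Figure \ref{fig:antidiagonal}. For the Phase 1 case, I would construct an explicit bijection from the set of pairs $(T, g)$, where $T \in \LRyb$ and $g$ is a gap in the Phase 1 $\ybox$ path, to the set of genomic tableaux $K(\gamma^c/\alpha;\beta)$.

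First I would isolate what a gap records combinatorially. Because Phase 1 is driven by a local rule on the ballot (lattice-word) structure of the reading word, each time the $\ybox$ skips past a cell $s$, it is because moving into $s$ would temporarily create an excess of a certain letter in the suffix; the algorithm resolves this by proceeding further along the vertical strip. I would characterize gaps as precisely the cells at which such a ``doubled'' ballot condition occurs, and identify the letter value $v$ attached to the gap (essentially, the entry of $T$ that was pushed past). Given $(T,g)$, I would construct a filling of $\gamma^c/\alpha$ as follows: keep the letters of $T$, assign them their natural gene structure, and then duplicate the gene of value $v$ associated to the position $g$, placing the duplicate entry at the gap cell. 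This should produce a valid ballot genomic tableau, matching the Pechenik–Yong definition recalled in Section \ref{sec:K-theory}.

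The core verification splits into three steps. (i) Well-definedness: the output satisfies the row-, column-, and gene-ballot conditions defining $K(\gamma^c/\alpha;\beta)$; this reduces to the local analysis of how the Phase 1 rule interacts with ballotness. (ii) Injectivity: from the genomic tableau, erasing the duplicated gene recovers $T$ and locates $g$ uniquely, because the Phase 1 path depends only on $T$. (iii) Surjectivity: given a genomic tableau $G$, the duplicated gene picks out a candidate pair $(T,g)$; one must show that the Phase 1 algorithm run on $T$ actually produces a gap at position $g$ with the predicted letter value. This last step is the main obstacle: it requires showing that every genomic violation of ballotness is ``visited'' by the Phase 1 path in the correct order, which I would handle by inducting on the number of letters already processed, using the weakly increasing nature of the $\ybox$ path to align the inductive hypothesis with the lexicographic structure on genes.

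Finally, the Phase 2 statement would follow by applying the same argument to the evacuated/reverse tableau and invoking the antidiagonal symmetry between the two phases; combined with the fact that both phases produce a bijection to the same set $K(\gamma^c/\alpha;\beta)$, this also gives a symmetry statement on the number of gaps in each phase that I expect to recover as a corollary.
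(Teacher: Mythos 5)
Your overall strategy matches the paper's: a gap is a non-adjacent jump (a $\pieri$ move in Phase 1, a $\cpieri$ move in Phase 2), the associated genomic tableau is obtained by doubling the gene of the value $i$ being swapped so that its two members sit at the two endpoints of the jump, and well-definedness is checked against the three conditions of Lemma \ref{lem:genomic-criterion} using the fact that the intermediate tableaux stay semistandard and ballot (Lemma \ref{lem:pieri-jumps-yamanouchi}). Your identification of surjectivity as the main obstacle is also accurate; that is where the real work lies.

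The one step that would fail as written is the construction itself: the genomic tableau must be built on the \emph{intermediate} tableau $T_i$ --- the state of the algorithm just before the jump, with the $\ybox$ replaced by $i$ --- not on the original $T$ with a duplicate entry inserted at the gap cell. By the time the algorithm reaches value $i$, one letter of each of the values $1,\dots,i-1$ has already moved, and the cell currently occupied by the $\ybox$ is no longer its original cell; keeping the letters of $T$ in place and adding a duplicate at the current gap cell double-fills one cell and leaves the original $\ybox$ cell empty, so it is not even a filling of $\gamma^c/\alpha$. This also undermines your injectivity argument: erasing the duplicated gene recovers the intermediate tableau, not $T$, so recovering $(T,g)$ requires running the local algorithm backwards from that intermediate state (which does work, since each step of $\lesh$ is invertible, but it is a separate verification, not immediate from ``the Phase 1 path depends only on $T$''). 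Finally, deducing Phase 2 from the antidiagonal symmetry is in the spirit of our argument (which obtains that symmetry from dual equivalence and outward rectification), but the symmetry a priori produces genomic tableaux for the conjugated shape and content, so you must additionally check that $\Kabg$ is carried to itself under the relevant involution, or else argue Phase 2 directly as we do with the bijection $\varphi_2$.
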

%\noindent We will in fact prove a stronger version of this result, showing that the tableaux $\Kabg$ also arise, exactly once each, during Phase 2.
Using the bijections of Theorem \ref{thm:MainResult2}, we give an independent, purely combinatorial proof of the relations \eqref{eqn:ktheory-ineq} and \eqref{eqn:ktheory-mod2}, by factoring $\omega$ into auxiliary operators $\omega_i$, which roughly correspond to the individual steps of local evacuation-shuffling, applied in isolation. If $\beta$ has $\ell(\beta)$ parts, we have the following:

\begin{thm}\label{thm:intro-parity} 
There is a factorization of $\omega$ as a composition $\omega_{\ell(\beta)} \cdots \omega_{1}$, such that each orbit $\mathcal{O}$ of each $\omega_i$ corresponds to $|\mathcal{O}|-1$ distinct K-theoretic tableaux.  Summing over the orbits yields \eqref{eqn:ktheory-ineq} and \eqref{eqn:ktheory-mod2}.
\end{thm}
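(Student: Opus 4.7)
The plan rests on interpreting $|\LRyb| - |\mathrm{orbits}(\omega)|$ as the reflection length $\mathrm{rlength}(\omega)$ of $\omega$ acting on $\LRyb$, since for any permutation $\pi$ of a finite set, this difference equals the minimum number of transpositions in a factorization of $\pi$. Two general properties then do the heavy lifting: reflection length is subadditive under composition, and $\mathrm{sgn}(\pi) \equiv \mathrm{rlength}(\pi) \pmod 2$ is multiplicative. Thus it suffices to exhibit a factorization $\omega = \omega_{\ell(\beta)} \cdots \omega_1$ satisfying
\[
|\Kabg| \;=\; \sum_{i=1}^{\ell(\beta)} \mathrm{rlength}(\omega_i),
\]
from which both \eqref{eqn:ktheory-ineq} and \eqref{eqn:ktheory-mod2} follow immediately.

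To construct the $\omega_i$, I would dissect the local evacuation-shuffling algorithm of Theorem \ref{thm:MainResult1} according to ballot content. The $\ybox$ in $\lesh$ moves through $T$ in a weakly increasing sequence of squares, and this sequence subdivides naturally into $\ell(\beta)$ blocks, the $i$-th block interacting with letters of value $i$ in the ballot reading word. Let $\omega_i$ be the composition of the $i$-th block of $\lesh$ with the matching portion of the jeu de taquin slide $\sh$, chosen so that $\omega_{\ell(\beta)} \circ \cdots \circ \omega_1 = \sh \circ \lesh = \omega$. One then checks that each $\omega_i$ permutes $\LRyb$ by inducting on $i$ and verifying that each block preserves the ballot condition on the relevant sub-word.

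The main obstacle is establishing the orbit-to-K-tableau correspondence for each $\omega_i$ individually: every orbit $\mathcal{O}$ must yield exactly $|\mathcal{O}|-1$ distinct elements of $\Kabg$. Here I would refine Theorem \ref{thm:MainResult2} stage by stage, showing that the gaps produced by the $i$-th block biject with a distinguished subset of $\Kabg$ (roughly, the tableaux whose genomic $i$-th row matches the block's data). Within each orbit one iterate should yield no gap---a ``closing'' step that returns the $\ybox$ to its starting cell---leaving $|\mathcal{O}|-1$ gap-producing iterates; distinctness of the resulting K-tableaux, both within an orbit and across orbits of the same $\omega_i$, will follow from showing that the gap datum reconstructs the starting tableau. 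Summing $\sum_{\mathcal{O}}(|\mathcal{O}|-1) = \mathrm{rlength}(\omega_i)$ over orbits of $\omega_i$ and then over $i$ recovers $|\Kabg|$, which combined with the opening paragraph completes the proof. The hardest technical point will be verifying that this refined gap-to-genomic-tableau assignment is well-defined and bijective one stage at a time, since the $i$-th block of $\lesh$ is intertwined with later blocks in the full algorithm.
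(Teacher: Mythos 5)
Your proposal follows essentially the same route as the paper: interpret $|\LRyb| - |\mathrm{orbits}(\omega)|$ as $\mathrm{rlength}(\omega)$, use subadditivity of reflection length and multiplicativity of sign, factor $\omega$ into operators $\omega_i$ built from the value-$i$ blocks of $\lesh$ and $\sh$ (the paper makes your ``matching portion'' precise by setting $\omega_i = (s_1\cdots s_{i-1})(s_i\ell_i)(s_1\cdots s_{i-1})^{-1}$ so the product telescopes, after a small lemma showing the intermediate tableau sets for $\lesh$ and $\sh$ coincide), and then show each orbit of $s_i\circ\ell_i$ contributes $|\mathcal{O}|-1$ genomic tableaux, with the non-contributing step being the Phase~2 ``special jump'' --- your ``closing'' iterate --- and distinctness supplied by the bijection $\varphi_1$ of Theorem~\ref{thm:generating-ktheory}. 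The approach and key ideas match; only the bookkeeping of the conjugation and the $X_i = X_i'$ lemma are left implicit in your sketch.
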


In Section \ref{sec:local-esh}, we define $\lesh$ and sketch the proof that it agrees with $\esh$. Section \ref{sec:K-theory} contains the link to $K$-theory, and sketches of the proofs of Theorems \ref{thm:MainResult2} and \ref{thm:intro-parity}. Sections \ref{sec:constructions} and \ref{sec:conjectures} explore some consequences of the main results.

%The paper is organized as follows.  In Section \ref{sec:background}, we briefly recall the necessary background and definitions from the theory of tableaux.  In Section \ref{sec:local-esh}, we define $\lesh$, establish its properties, and show that it agrees with $\esh$.  Section \ref{sec:K-theory} contains the link to $K$-theory, and the proofs of Theorems \ref{thm:MainResult2} and \ref{thm:intro-parity}.
  
%The remaining sections explore some consequences of the main results. In Section \ref{sec:constructions}, we use our combinatorial results to construct examples of Schubert curves with notable geometric structure, and we show that if $\omega$ is the identity permutation then $S$ is a disjoint union of $\mathbb{P}^1$'s.  Finally, in Section \ref{sec:conjectures} we list some combinatorial and geometric conjectures, including a stronger form of the inequality \eqref{eqn:ktheory-ineq}.

\section{Background}\label{sec:background}

%%%%%%%%%%%%%%%%%%%%%%%%%
%%%%% BACKGROUND    %%%%%
%%%%%%%%%%%%%%%%%%%%%%%%%

\subsection{Tableaux and shuffling}
We refer to \cite{bib:Fulton} for the standard definitions of partitions, semistandard Young tableaux and jeu de taquin.  We briefly state some additional conventions that we will use.

Let $\lambda=(\lambda_1\ge \cdots \ge \lambda_k)$ be a partition.  We will refer to the partition $\lambda$ and its \newword{Young diagram} interchangeably throughout, using the English convention for Young diagrams. 
%The \newword{corners} of $\lambda$ are the squares which, if removed, leave a smaller partition behind. The \newword{co-corners} are, dually, the exterior squares which, if added to $\lambda$, give a larger partition.
% The \newword{transpose} of $\lambda$ is the partition $\lambda^\ast$ obtained by transposing its Young diagram.
If $\mu$ is a partition with $\mu_i\le \lambda_i$ for all $i$, then the \newword{skew shape} $\lambda/\mu$ is the diagram formed by deleting the squares of $\mu$ from that of $\lambda$.  Its \newword{size}, written $|\lambda/\mu|$, is the number of squares that remain in the diagram. We will occasionally refer to (co-)corners of a skew shape. The \newword{inner} (respectively, \newword{outer}) \newword{corners} of $\lambda/\mu$ are the corners of $\lambda$ (respectively, the co-corners of $\mu$). These are the squares which, if deleted, leave a smaller skew shape. Similarly, the \newword{inner} (resp. \newword{outer}) \newword{co-corners} are the co-corners of $\lambda$ (resp. the corners of $\mu$): the exterior squares which can be added to obtain a larger skew shape.%\footnote{Note that the definition of corner of $\lambda/\mu$ depends on the pair of partitions $\lambda$ and $\mu$, not just the squares that make up the skew shape. The same square may be both an inner and outer corner; likewise for co-corners.}

%\begin{figure}[h]
%\centering
%\[\lambda/\mu \ =\ \young(::\hfil\hfil\hfil,:\hfil\hfil\hfil\hfil,\hfil\hfil\hfil) \qquad \young(::\square\hfil\hfil,:\square\hfil\hfil\circ,\square\hfil\circ) \hspace{0.85cm} \raisebox{-10mm}{\young(:\blacksquare\hfil\hfil\hfil\bullet,\blacksquare\hfil\hfil\hfil\hfil,\hfil\hfil\hfil\bullet,\bullet)}\]
%\caption{Corners ($\square,\circ$) and co-corners ($\blacksquare,\bullet$).}
%\label{fig:corners}
%\end{figure}

We write $\rect=((n-k)^k)$ to denote a fixed rectangular shape of size $k\times (n-k)$, and we will always work with skew shapes that fit inside $\rect$.  The \newword{complementary} partition to $\lambda\subset \rect$, denoted $\lambda^c$, is the partition $(n-k-\lambda_k,n-k-\lambda_{k-1},\ldots,n-k-\lambda_1)$. %Note that $\lambda^c$ can be formed by rotating $\lambda$ by $180^\circ$ about the center of $\rect$ and then removing it from $\rect$.

Let $T$ be a semistandard Young tableau of shape $\lambda/\mu$.  The \newword{reading word} of $T$ is the sequence formed by reading the rows from bottom to top, and left to right within a row.  The \newword{suffix} of an entry $m$ of $T$ is the suffix of the reading word consisting of the letters \emph{strictly} after $m$.  The \newword{weak suffix} is the suffix including that letter and those after it.  A suffix is \newword{ballot for $(i,i+1)$} if it contains at least as many $i$'s as $i+1$'s, and is \newword{tied} if it has the same number of $i$'s as $i+1$'s. Finally, $T$ is \newword{ballot} or \newword{Littlewood-Richardson} (also known as \emph{Yamanouchi} or \emph{lattice}) if every weak suffix of its reading word is ballot for $(i,i+1)$, for all $i$.

Let $S,T$ be semistandard skew tableaux, such that the shape of $T$ \newword{extends} the shape of $S$, that is, $T$ can be formed by successively adding outer co-corners starting from $S$. Let $S'$ and $T'$ respectively be the tableaux formed by performing successive outward (resp. inward) jeu de taquin slides on $S$ (resp. $T$), using the entries of $T$ in ascending (resp. $S$, descending) order, and ordering equal entries of $T$ from left to right (resp. $S$, right to left).

\begin{definition}
The (jeu de taquin) \newword{shuffle} of $(S,T)$, denoted $\sh(S,T)$, is the pair of tableaux $(T',S')$.
\end{definition}

  The \newword{rectification} of a skew tableau $T$, denoted $\rectify(T)$, is the straight shape tableau formed by shuffling $T$ with any straight shape tableau $S$.  It is well known (often called the ``fundamental theorem of jeu de taquin'') that the result does not depend on $S$.

\subsection{Evacuation-shuffling and \texorpdfstring{$\omega$}{w}} \label{sec:shuffling-ops}

Note that a straight-shape or rotated straight-shape has only one Littlewood-Richardson tableau, so an element of $\LRyb$ is essentially a pair $(\ybox,T)$, with $T$ a Littlewood-Richardson tableau of content $\beta$, and $\ybox$ an inner co-corner of $T$, such that the shape of $\ybox \sqcup T$ is $\gamma^c/\alpha$. Computing $\esh(\ybox,T)$ consists of the following steps: \cite{bib:Levinson}
\begin{itemize}
  \item \textbf{Rectification.} Treat the $\ybox$ as having value $0$ and being part of a semistandard tableau $\widetilde{T}=\ybox \sqcup T$.  Rectify, i.e. shuffle $(S,\widetilde{T})$ to $(\widetilde{T}',S')$, where $S$ is an arbitrary straight-shape tableau.
  \item \textbf{Shuffling, or Promotion.}  (See \cite{bib:StanleyEC2} for the definition of promotion.)  Delete the $0$ of $\widetilde{T}'$ and rectify the remaining portion of $\widetilde{T}'$.  Label the resulting empty outer corner with $\ell(\beta)+1$.
  \item \textbf{Un-rectification.} Un-rectify, i.e. shuffle once more with $S'$.  Replace the entry $\ell(\beta)+1$ by $\ybox$.
\end{itemize}
Note that the promotion step is equivalent to shuffling the $\ybox$ past the rest of the rectified tableau. Thus, evacuation-shuffling corresponds to conjugating the ordinary jeu de taquin shuffle (on skew tableaux) by rectifying the tableau. This procedure outputs an element $(T',\ybox) \in \LRby$. Finally, the monodromy operator $\omega = \sh \circ \esh$ is the \emph{commutator} of rectification and shuffling.

\section{Local evacuation-shuffling} \label{sec:local-esh}
%%%%%%%%%%%%%%%%%%%%%%%%%
%%%%% PIERI CASE    %%%%%
%%%%%%%%%%%%%%%%%%%%%%%%%

We will now define \newword{local evacuation-shuffling},
\[\lesh : \LRyb \to \LRby,\]
a local algorithm for computing $\esh$. The base case of our algorithm is the \newword{Pieri case}, where $\beta$ is a one-row partition. In this case, $\esh$ was computed in Theorem 5.10 of \cite{bib:Levinson}, and we recall it here.

\begin{thm}[Pieri case]\label{thm:Pieri-new}
Let $\beta$ be a one-row partition. Then $\esh(\ybox,T)$ exchanges $\ybox$ with the nearest $1 \in T$ \emph{prior to it} in reading order, if possible. If there is no such $1$, $\esh$ instead exchanges $\ybox$ with the \emph{last} $1$ in reading order (a \newword{special jump}).
\end{thm}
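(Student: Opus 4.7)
The plan is to compute the three phases of $\esh$---rectification, promotion, and un-rectification---directly in the Pieri case, tracking the position of the $\ybox$ throughout. Since $\beta=(p)$ is a one-row partition, $T$ consists entirely of $1$'s forming a horizontal strip, and $\widetilde{T}=\ybox\sqcup T$ (treating $\ybox$ as $0$) has content $(1,p)$.

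First I would analyze the rectification. Because $\widetilde{T}$ has only two distinct values, its rectification $\widetilde{T}'$ must be a straight shape of the form $(p+1)$ or $(p,1)$, with the $0$ forced to position $(1,1)$. A standard RSK computation on the reading word identifies which case occurs: the shape is $(p+1)$ precisely when $\ybox$ precedes all $1$'s in reading order, and $(p,1)$ otherwise. This computation also records, for each cell of $\widetilde{T}'$, which original $1$ of $T$ occupies it. The promotion step is then a short calculation: in the $(p+1)$ case, deleting the $0$ causes the hole to slide rightward to $(1,p+1)$, which is labelled $2$; in the $(p,1)$ case, the tie between the right and below neighbors of the initial hole forces the $1$ at $(2,1)$ to move up (to preserve column-strictness), leaving the hole at $(2,1)$ to be labelled $2$.

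The main obstacle is the un-rectification step, where I must track the $2$ through the reverse JDT slides on the promoted tableau $P$. Because $2$ is strictly larger than every remaining entry, its reverse-slide trajectory is comparatively rigid at each step; in particular, whenever the $2$ lies next to the current hole, it is always the entry chosen to move. My plan is to show, by a careful induction on the sequence of rectification slides, that the reverse slides carry the $2$ precisely to the cell originally occupied by the $1$ immediately preceding $\ybox$ in reading order---or, in the $(p+1)$ case (where no $1$ precedes $\ybox$), to the cell of the last $1$ in reading order of $T$, producing the special jump. The key step is to characterize the final position of the $2$ in terms of the reading order of $\widetilde{T}$, which I expect to follow from the correspondence between RSK bumping paths and JDT slide trajectories, together with explicit analysis of how the reverse slides redirect when they encounter the $2$. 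Finally, verifying that the output lies in $\LRby$ reduces to checking that the $1$'s still form a horizontal strip with $\ybox$ inserted at the new position, which is immediate from the swap description.
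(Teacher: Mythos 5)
This extended abstract does not actually prove Theorem \ref{thm:Pieri-new}; it defers to \cite{bib:Levinson} and to the full version \cite{bib:GillespieLevinson}, where the argument runs through Haiman's dual equivalence. Judged on its own terms, your treatment of the first two phases is correct and complete: the rectification of $\widetilde{T}=\ybox\sqcup T$ has shape $(p+1)$ exactly when the $\ybox$ precedes every $1$ in reading order and shape $(p,1)$ otherwise, and your description of promotion (the hole sliding to $(1,p+1)$, resp.\ the tie forcing the $1$ at $(2,1)$ upward) is right. Note also that the special-jump case can be finished with no slide-tracking at all: if the promoted tableau is the single row $1^p\,2$, then any semistandard skew tableau rectifying to it must have the $2$ as the \emph{last} letter of its reading word, which forces the $2$ into the last cell of $\gamma^c/\alpha$ in reading order, i.e.\ the cell of the last $1$ of $T$.

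The genuine gap is the un-rectification in the main case. When the promoted tableau $P$ has shape $(p,1)$, the semistandard fillings of $\gamma^c/\alpha$ rectifying to $P$ are in general not unique (already in the paper's second Pieri example there are two, and only one of them is the correct output), so the answer truly depends on the recording data $S'$, and the entire content of the theorem is the claim that the reverse slides deposit the $2$ in the cell of the $1$ immediately preceding the $\ybox$. Your proposed mechanism does not yet establish this. The observation that the $2$, being maximal, always moves when a hole is adjacent to it only says the $2$ drifts when a hole happens to pass by; it does not control \emph{which} reverse slide paths reach it or where it stops. Moreover, the reverse slides are applied to $P$, not to $\widetilde{T}'$, so the forward bumping-path/slide-path correspondence for $\widetilde{T}$ does not directly govern them; and ``which original $1$ of $T$ occupies a given cell of the rectification'' is not well defined under jeu de taquin without fixing a standardization, so even the target of your induction needs an intrinsic description. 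What is missing is an invariant comparing the intermediate stages of the forward rectification of $\widetilde{T}$ with the reverse rectification of $P$ --- exactly the role played by dual equivalence (equivalently, a growth-diagram or tableau-switching computation) in the full paper. Until that invariant is identified and the induction is actually carried out, the main case remains unproved.
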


%\begin{thm}[Pieri case]\label{thm:Pieri}
%Let $\beta=(m)$ be a one-row partition.
%\begin{enumerate}
% \item Suppose $\gamma^c/\alpha$ is \emph{not} a horizontal strip. Then $\gamma^c/\alpha$ contains a unique vertical domino; otherwise there is no semistandard filling of $\gamma^c/\alpha$ using a $\ybox$ and $1$'s.  In this case, $\LRyb$ and $\LRby$ have one element each, since the $\ybox$ must be at the top or bottom of the domino. The maps $\sh$ and $\esh$ slide the $\boxtimes$ up and down. 
We give two examples, illustrating the possible actions of $\esh$ and the more familiar $\sh$. 

\begin{enumerate}
\item If the skew shape contains a (necessarily unique) vertical domino:
\[{\small \young(::\x11,:11,1)} \hspace{0.2cm}\stackrel{\xrightarrow{\esh}}{\xleftarrow[\,\sh\,]{}} \hspace{0.2cm}{\small \young(::111,:1\x,1)}\]
\item Otherwise, the action of $\esh \circ \sh$ cycles the $\ybox$ through the rows of $\gamma^c/\alpha$:
%\item Suppose $\gamma^c/\alpha$ is a horizontal strip having $r$ nonempty rows.  Then each of $\LRyb$ and $\LRby$ have $r$ elements, since the $\ybox$ must be at the left or right end of a row.  The map $\esh$ moves the $\ybox$ down one row, or to the top row if the $\ybox$ is at the bottom.  The map $\sh$ moves the $\ybox$ from the right to the left end of its row.
\[{\small\young(:::\x11,:11,1)} \hspace{0.2cm}\xrightarrow{\esh} \hspace{0.2cm}{\small\young(:::111,:1\x,1)}
\hspace{0.2cm}\xrightarrow{\sh} \hspace{0.2cm} {\small\young(:::111,:\x1,1)}\]
\[{\small\young(:::111,:11,\x)} \hspace{0.2cm}\xrightarrow{\esh} \hspace{0.2cm}{\small\young(:::11\x,:11,1)}
\hspace{0.2cm}\xrightarrow{\sh} \hspace{0.2cm} {\small\young(:::\x11,:11,1)}\]
\end{enumerate}
%\end{thm}

We refer the reader to \cite{bib:GillespieLevinson} or \cite{bib:Levinson} for two different proofs of this result.

\subsection{The algorithm}
%%%%%%%%%%%%%%%%%%%%%%%%%
%%%%% THE ALGORITHM %%%%%
%%%%%%%%%%%%%%%%%%%%%%%%%
We now state our new algorithm for evacuation-shuffling a box past an arbitrary ballot skew tableau.

\begin{definition}\label{def:algorithm}
Let $(\ybox, T) \in \LRyb$. We define the \emph{local evacuation-shuffle}, $\lesh(\ybox,T)$, by the following algorithm, starting at $i=1$.
  
  \begin{itemize}
    \item \textbf{Phase 1.} If the $\ybox$ does not precede all of the $i$'s in reading order, switch $\ybox$ with the nearest $i$ \emph{prior} to it in reading order. %use the Pieri rule of Theorem \ref{thm:Pieri} to switch $\ybox$ past the horizontal strip of $i$'s in $T$.  
    Then increment $i$ by $1$ and repeat this step.  

    If, instead, the $\ybox$ precedes all of the $i$'s in reading order, go to Phase 2. 
    \item\textbf{Phase 2.} If the suffix from $\ybox$ is not tied for $(i,i+1)$, switch $\ybox$ with the nearest $i$ \emph{after it} in reading order. Repeat this process until the suffix becomes tied for $(i,i+1)$. Then increment $i$ by $1$ and repeat this step until $i=\ell(\beta)+1$.    
    
    %Choose the first $i$ or $\ybox$ that occurs weakly after $\ybox$ in reading order whose suffix has exactly as many $i$'s as $i+1$'s.  Interchange the positions of the $\ybox$ and this $i$.  Increment $i$ by $1$ and repeat this step until $i=\ell(\beta)+1$.
  \end{itemize}
\end{definition}
\begin{remark}
Phase 1 is identical to the Pieri case \emph{unless} the Pieri case calls for a special jump.
\end{remark}
In Phase 1, $\ybox$ moves down and to the left; in Phase 2, $\ybox$ instead moves to the right and up. (See Figure \ref{fig:antidiagonal}.)  We refer to the squares occupied by the box during the algorithm as the \newword{evacu-shuffle path}.

Note that in Phase 2, it is not obvious that we can find an $i$ after the $\ybox$ in reading order.  However, in \cite{bib:GillespieLevinson} we show the following lemma, which states that the tableau essentially remains semistandard and ballot at each step of the algorithm.  Consequently, the topmost $i$ is such a square.

\begin{lemma} \label{lem:pieri-jumps-yamanouchi}
Let $T_i$ be the tableau before the $i$-th step.  Then, omitting the $\ybox$, the rows (columns) of $T$ are weakly (strictly) increasing and the reading word of $T$ is ballot.
\end{lemma}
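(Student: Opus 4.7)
The plan is to argue by induction on the total number of elementary swaps performed by $\lesh$, verifying after each swap that the three properties (rows weakly increasing, columns strictly increasing, and reading word ballot) persist for the tableau once $\ybox$ is ignored. The base case is immediate from the hypothesis that $T \in \LRyb$, so below I focus on the inductive step.

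Each swap exchanges $\ybox$ with a cell containing the letter $i$, and the net effect on the reading word of $T$ (after removing $\ybox$) is to relocate a single occurrence of $i$. The semistandardness check reduces to a local verification at the destination cell of $i$: I would enumerate the possible geometric configurations of the swap partner (for Phase 1, an $i$ either to the left of $\ybox$ in the same row or, failing that, elsewhere in a row strictly below; for Phase 2, the mirror-image configurations to the right of $\ybox$ or in a row strictly above) and directly confirm in each case that the relocated $i$ respects the row and column inequalities with its new neighbors.

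The ballot verification is the main obstacle. For a Phase 1 swap at current index $i$, the chosen $i$ moves \emph{later} in the reduced reading word, so $\#_i$ weakly increases in every suffix; this trivially preserves the $(i, i+1)$ inequality but threatens the $(i-1, i)$ inequality. I would proceed by an outer induction on $i$, carrying the stronger invariant that at the start of the $i$-th Phase 1 step, every suffix whose $i$-count would be incremented by the swap already has strict inequality $\#_{i-1} > \#_i$. The "nearest prior $i$" choice ensures that the only affected suffixes are those bracketed between the old and new positions of $i$, and these can be related to the suffix immediately following $\ybox$ by bookkeeping the intervening $(i-1)$'s.

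For a Phase 2 swap, the chosen $i$ moves \emph{earlier}, so $\#_i$ weakly decreases in some suffixes and the threatened inequality is now $(i, i+1)$. Here the key tools are the defining stopping rule of Phase 2 (swap only while the suffix from $\ybox$ is not tied for $(i, i+1)$) together with the "nearest $i$ after $\ybox$" rule, which constrains the letters lying strictly between $\ybox$ and its partner. A careful tally of how each swap modifies $\#_i$ and $\#_{i+1}$ in the various suffixes should show that each Phase 2 swap decreases the $\ybox$-suffix's $(i, i+1)$ excess by one without creating any new violation elsewhere. The most delicate piece of bookkeeping is propagating this invariant through the full Phase 2 subsequence for a single $i$; a clean approach is to reinterpret this subsequence as a repeated application of the Pieri-type operation of pushing $\ybox$ past a strip of $i$'s, which lets me reduce the strict-inequality verifications to the Pieri case (Theorem \ref{thm:Pieri-new}) applied in a localized window, rather than redoing the analysis swap-by-swap.
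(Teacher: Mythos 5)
This extended abstract does not actually prove Lemma \ref{lem:pieri-jumps-yamanouchi}; it is stated with a pointer to the full version \cite{bib:GillespieLevinson}, so there is no in-paper argument to compare against line by line. Judged on its own terms, your plan gets the skeleton right: induction over the elementary swaps, the observation that a Phase~1 swap moves an $i$ later in the reading word and so only threatens the $(i-1,i)$ inequality, and that a Phase~2 swap moves an $i$ earlier and only threatens $(i,i+1)$. But the places you label ``bookkeeping'' are where the entire content of the lemma lives, and as stated your invariants do not close the induction. In Phase~1, the hypothesis ``every suffix whose $i$-count is incremented already satisfies $\#_{i-1}>\#_i$'' must itself be re-derived at the start of step $i+1$ from what holds after step $i$; chasing this through, one needs control over letters \emph{other} than $i-1$ and $i$ in the bracketed window (e.g.\ whether any $i+1$'s sit between the nearest prior $i$ and the $\ybox$), which forces extra positional and tied-ness data about where the $\ybox$ sits relative to the horizontal strips --- data your invariant does not carry. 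The semistandardness check is likewise not purely local: one must \emph{exclude} configurations such as an $i$ lying directly below the $\ybox$ while the chosen swap partner is elsewhere, or an $i-1$ immediately to the $\ybox$'s right, and these exclusions use the history of the algorithm together with ballotness, not just the destination cell's neighbors.

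The most serious gap is in Phase~2. Your claim that each swap ``decreases the $\ybox$-suffix's $(i,i+1)$ excess by one'' presupposes that there are no $(i+1)$'s strictly between the $\ybox$ and the nearest $i$ after it in reading order; without that, the excess changes by $-1+\#_{i+1}(\text{intervening letters})$, which could be nonnegative, and the phase would not even terminate at a tie. This is an unproven structural assertion, not bookkeeping. Moreover, the proposed reduction ``to the Pieri case in a localized window'' is not available off the shelf: Theorem \ref{thm:Pieri-new} describes a different operation (swap with the nearest \emph{prior} $1$, with a special jump to the \emph{last} $1$ otherwise), not ``repeatedly swap with the nearest \emph{following} $i$ until tied,'' and restricting to a window does not obviously preserve ballotness of suffixes that extend beyond it. As Section \ref{sec:main-result} indicates, the authors instead handle Phase~2 via an antidiagonal symmetry (and, in the full paper, dual equivalence and outward rectification) that converts Phase~2 into Phase~1 of a rotated/conjugate problem; some mechanism of that kind, or a genuinely strengthened simultaneous induction hypothesis, is needed to make your outline into a proof.
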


\begin{definition} 
  We use the following conventions for the $j$-th movement of the $\ybox$ in $\lesh$:
 
  $\pieri_j$ -- a \newword{regular Pieri jump}, a Phase 1 move in which the $\ybox$ moves down-and-left.
  
  $\vertical_j$ -- a \newword{vertical slide}, a Phase 1 move in which the $\ybox$ moves strictly down.
  
  $\cpieri_j$ -- a \newword{conjugate Pieri jump}, a Phase 2 move in which the $\ybox$ moves up-and-right.
  
  $\horiz_j$ -- a \newword{horizontal slide}, a Phase 2 move in which the $\ybox$ moves strictly right.
  
  We also say that $s$ is the \newword{transition step} if the algorithm switches to Phase 2 while $i = s$. If the algorithm remains in Phase 1 throughout, we say the transition step is $s=\ell(\beta)+1$.
\end{definition}

\begin{example}
 % We compute $\lesh(\ybox,T)$, where $T$ is the leftmost tableau in the figure below.  We start in Phase 1 with $i=1$, and the $\ybox$ does a vertical slide past the $1$'s, then a regular Pieri jump past the $2$'s, as shown below.
  
  %Since the $\ybox$ now precedes all the $3$'s in reading order, we transition to Phase 2. We look for the first $3$ after the $\ybox$ (or $\ybox$ itself) whose $(3,4)$-suffix is tied. We interchange the $\ybox$ with that $3$. We repeat for $4$ (interchanging the $\ybox$ with the last $4$, in this case). In step $5$, the $(5,6)$-suffix of the $\ybox$ is already tied, since the $\ybox$ is past all the $5$'s. Thus the $\ybox$ does not move further.
   The diagram below demonstrates the $\lesh$ algorithm.
  \[\young(::\x 11,::122,::3,::4,23)\xrightarrow{\vertical_1}
    \young(::111,::\x 22,::3,::4,23)\xrightarrow{\pieri_2}
    \young(::111,::222,::3,::4,\x 3)\xrightarrow{\horiz_3}
    \young(::111,::222,::3,::4,3\x )\xrightarrow{\cpieri_4}
    \young(::111,::222,::3,::\x,34)\]
    \vspace{0.2cm}
%    Finally, we use jeu de taquin to compute $\sh_2(\lesh_2(T))$, to find $$\omega(T)=\young(::\x11,::12,::2,::3,345).$$
\end{example}
 
\subsection{Proof of Theorem \ref{thm:MainResult1}} \label{sec:main-result}

%%%%%%%%%%%%%%%%%%%%%%%%%%%%%%%
%%%% Proof of Main Result %%%%%
%%%%%%%%%%%%%%%%%%%%%%%%%%%%%%%

In this section we outline the proof of the following:

\begin{thm}\label{thm:main-theorem}
  Local evacuation-shuffling agrees with evacuation-shuffling, that is, for any $(\ybox,T)$, \[\lesh(\ybox,T) = \esh(\ybox,T).\]
\end{thm}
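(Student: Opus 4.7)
The plan is to establish Theorem \ref{thm:main-theorem} by induction on $\ell(\beta)$, with the Pieri case (Theorem \ref{thm:Pieri-new}) as the base. The key structural input is Lemma \ref{lem:pieri-jumps-yamanouchi}: the tableau minus $\ybox$ remains semistandard and ballot throughout the algorithm. This allows me to treat each step of $\lesh$ as a Pieri-type move applied to a smaller valid sub-problem, keeping the inductive hypothesis available.

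For the inductive step, I would first address Phase 1. Let $s$ denote the transition step; Phase 1 executes moves for $i = 1, \ldots, s-1$ without any special jumps, each just swapping $\ybox$ with the nearest earlier $i$. The plan is to show that the $i=1$ move of $\lesh$ matches what $\esh$ performs on the sub-chain containing only the $1$'s of $T$, via the known Pieri case. The main idea is that one can factor the single rectification used to define $\esh$ into a sequence of rectifications, one per value, and that the Pieri-case behavior at the $1$'s carries over unchanged when the higher values are present. After this first step, the remaining data (with the new position of $\ybox$) forms a smaller LR chain by Lemma \ref{lem:pieri-jumps-yamanouchi}, to which the inductive hypothesis on $\ell(\beta)-1$ parts applies.

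Handling the transition step $s$ and all of Phase 2 is the hard part. When the Pieri case would call for a \emph{special jump} at $i = s$ (because $\ybox$ precedes every $s$), the algorithm switches direction and $\ybox$ begins moving up and to the right. Here I would invoke the antidiagonal symmetry foreshadowed by Figure \ref{fig:antidiagonal}: under a suitable conjugation (transposition of shapes and reversal of the alphabet), Phase 2 on $(\ybox, T)$ becomes a Phase 1 computation on a transformed tableau, and the rectification-based $\esh$ satisfies a matching symmetry. This would reduce Phase 2, including the repetition on each $i$ until the suffix becomes tied for $(i,i+1)$, to the already-analyzed Phase 1 situation and its Pieri base case.

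The main obstacle will be the transition step itself: verifying that the jump from the final Phase 1 position to the initial Phase 2 position reproduces exactly what happens in the rectified promotion step when the special jump is triggered, and that the algorithm then re-enters at the correct index $i = s$. Once the transition is pinned down, the rest of the argument reduces to repeated application of the Pieri case and Lemma \ref{lem:pieri-jumps-yamanouchi}, with the antidiagonal symmetry converting each conjugate-Pieri move of Phase 2 into a familiar Pieri move on a transformed tableau, and the induction on $\ell(\beta)$ closing out the proof.
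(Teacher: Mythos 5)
Your outline shares the paper's overall architecture---reduce Phase 1 to the Pieri case one horizontal strip at a time, and handle Phase 2 by an antidiagonal symmetry---but two of its load-bearing steps have genuine gaps. The more serious one is the symmetry you invoke for Phase 2. ``Transposition of shapes and reversal of the alphabet'' is not an operation on Littlewood--Richardson tableaux: applying any such rigid transformation together with $i \mapsto \ell(\beta)+1-i$ to a ballot tableau of content $\beta$ yields a tableau whose content is the reversal of $\beta$ (or of $\beta'$), which is not a partition unless $\beta$ is very special, so the transformed object is not ballot and neither the Pieri case nor your inductive hypothesis applies to it. The antidiagonal symmetry the paper actually uses is a statement about the evacu-shuffle \emph{path}, and it is obtained by proving that $\esh$ can be computed via \emph{outward} (southeast) rectification as well as the inward rectification in its definition; that equivalence is exactly where the theory of dual equivalence classes (Haiman) enters the argument, and nothing in your proposal substitutes for it.

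The second gap is that your value-by-value induction does not match how the rectified shuffle factors after the transition. In the rectification $R$ of $(\ybox,T)$, the shape of $\ybox \sqcup R$ is $\beta$ plus an outer corner in some row $s$, and the promotion path runs straight down to row $s$ and then straight right. The paper factors this as shuffles past the horizontal strips $1,\dots,s-1$---consistent with your Phase 1 plan---followed by shuffles past $\beta_s$ \emph{vertical} strips taken greedily from the right of the remaining tableau (the ``$s$-decomposition''). Each vertical strip cuts across several values, so the Phase 2 moves of $\lesh$ do not decompose ``one value at a time,'' and peeling off values by induction on $\ell(\beta)$ does not line up with the actual factorization of the shuffle once the $\ybox$ has reached row $s$. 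Relatedly, you correctly flag the transition step as the crux but give no mechanism for proving that the first index $i$ for which the $\ybox$ precedes all $i$'s coincides with the row $s$ of the rectified outer corner; that identification is what makes the two halves of the decomposition correspond to the two phases, and it must be established rather than assumed.
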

 
\begin{figure}[t]
\begin{center}
  \includegraphics[height=2.5cm]{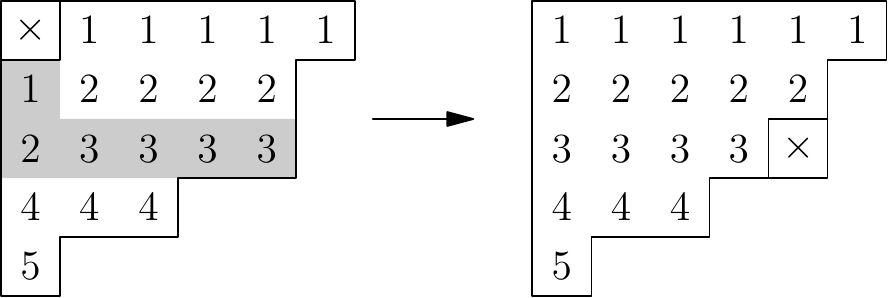} \qquad \hspace{1.3cm}\includegraphics[height=2.5cm]{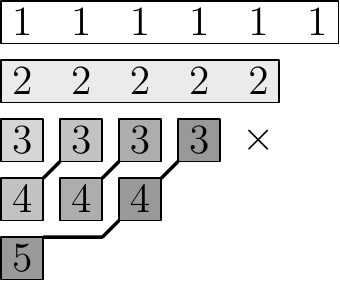}
\end{center}
\caption{An example of a rectified tableau $R$ with transition step $s=3$.  The promotion path of the box is down to row $s$ and then directly right. The corresponding \emph{$s$-decomposition} is shown at right.}
\label{fig:R-diagram} 
\end{figure}

The main idea is as follows.  In computing $\esh$, when we first rectify $(\ybox,T)$, we obtain a tableau $R$ of the form shown in Figure \ref{fig:R-diagram}.  In particular, the $\ybox$ is in the corner and the total shape of $\ybox \sqcup R$ is formed by adding an outer corner square to $\beta$ in some row $s$.  %In any row $i\le s$ the first square is $i-1$ (or $\ybox$ if $i=1$) and the remaining squares contain $i$; in any row $i>s$ the entries are all $i$.

When shuffling the $\ybox$ past $R$, the $\ybox$ follows a path directly down to row $s$ and then directly over to the end of row $s$, as shown.  It turns out that this corresponds to a more refined process in which we shuffle the $\ybox$ past rows $1,2,\ldots,s-1$, then shuffle it past the $\beta_s$ vertical strips formed by greedily taking vertical strips from the right of the remaining tableau.  We call this decomposition into horizontal and vertical strips the \newword{$s$-decomposition}, and we can similarly form the $s$-decomposition of the (unrectified, skew) tableau $T$ into horizontal and vertical strips.

Each step of Phase 1 of $\lesh$ corresponds to a single move of the $\ybox$ past a horizontal strip in the $s$-decomposition of $\beta$; the transition step then turns out to be $s$.  Using the \emph{antidiagonal symmetry} suggested by Figure \ref{fig:antidiagonal}, we show that the movements of the $\ybox$ during Phase 2 correspond similarly to shuffles past each of the $s$-decomposition's \emph{vertical} strips.

The complete proof given in \cite{bib:GillespieLevinson} uses the theory of \emph{dual equivalence classes} of tableaux (developed in \cite{bib:Haiman}), which are in bijection with Littlewood-Richardson tableaux.  The theory of dual equivalence allows us to use outwards rather than inwards rectification to compute $\esh$, which leads to the observed antidiagonal symmetry.

\section{Connections to K-theory}\label{sec:K-theory}

\subsection{Generating K-theoretic tableaux}

We recall the results we need on K-theory. The structure sheaves $\mathcal{O}_\lambda$ of Schubert varieties in $Gr(k,\mathbb{C}^n)$ form an additive basis for the K-theory ring $K(Gr(k,\mathbb{C}^n))$, with a product formula
\[[\mathcal{O}_\alpha] \cdot [\mathcal{O}_\beta] = \sum_{|\gamma^c| \geq |\alpha| + |\beta|} (-1)^{|\gamma^c| - |\alpha| - |\beta|}k_{\alpha \beta}^{\gamma^c} [\mathcal{O}_{\gamma^c}],\]
for certain nonnegative integer coefficients $k_{\alpha \beta}^{\gamma^c}$. %These coefficients enumerate certain tableaux, which we now discuss.
%
%In \cite{bib:ThomasYong}, Thomas and Yong defined a K-theoretic jeu de taquin for \newword{increasing tableaux}, i.e. tableaux that are both row- and column-strict.  The tableaux analogous to Littlewood-Richardson tableaux are those whose K-rectification is superstandard. 
%
In \cite{bib:Pechenik}, Pechenik and Yong introduced \newword{genomic tableaux} to enumerate $k_{\alpha \beta}^{\gamma^c}$, a `ballot semistandard' analog of Thomas and Yong's earlier theory \cite{bib:ThomasYong} of \emph{increasing tableaux}.  We state an equivalent characterization of genomic tableaux when $|\gamma^c| - |\alpha| - |\beta| = 1$.

\begin{lemdef} \label{lem:genomic-criterion}
Let $T$ be an (ordinary) semistandard tableau of shape $\gamma^c/\alpha$ and content equal to $\beta$ except for a single extra $i$. Let $\{\ybox_1,\ybox_2\}$ be a pair of squares of $T$. The data $(T, \{\ybox_1,\ybox_2\})$ corresponds to a \newword{ballot genomic tableau} if
\begin{itemize}
\item[(i)] The squares are non-adjacent and contain $i$,
\item[(ii)] There are no $i$'s between $\ybox_1$ and $\ybox_2$ in the reading word of $T$,
\item[(iii)] For $k = 1,2,$ the word obtained by deleting $\ybox_k$ from the reading word of $T$ is ballot.
\end{itemize}
We say that the \newword{K-theoretic content} is $\beta$. We write $\Kabg$ for the set of ballot genomic tableaux of shape $\gamma^c/\alpha$ and $K$-theoretic content $\beta$, and $\Kabg(i)$ for the tableaux with whose extra entry is $i$.

\end{lemdef}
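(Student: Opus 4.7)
The plan is to unpack Pechenik and Yong's definition of a ballot genomic tableau from \cite{bib:Pechenik}, specialized to the minimal case $|\gamma^c/\alpha| - |\beta| = 1$, and match it up with the three conditions (i)--(iii). The three conditions are each designed to encode one piece of the genomic structure: (i) the geometry of a single multi-cell gene, (ii) the canonical ordering of genes within a family, and (iii) the ballotness condition on reading words.

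First I would recall that in \cite{bib:Pechenik} a genomic tableau is a filling of $\gamma^c/\alpha$ whose entries are \emph{genes}, partitioned into \emph{families} (one family per letter). The $K$-theoretic content records the number of distinct genes in each family, which in our case equals $\beta$. Since the underlying shape has one more box than $|\beta|$, exactly one gene occupies two cells while every other gene is a singleton; the two-cell gene is $\{\ybox_1,\ybox_2\}$ and it lies in family $i$. Pechenik--Yong's semistandardness axioms on genes, applied to a two-cell gene, force its cells to be non-adjacent (they cannot share a row- or column-edge in the usual semistandard sense), which is exactly condition (i). Condition (ii) enforces the canonical linear ordering of genes of the same family: within family $i$, consecutive genes must correspond to consecutive blocks of $i$-entries in reading order, and since the only multi-cell gene has two cells, this reduces to the requirement that no other $i$ lies between $\ybox_1$ and $\ybox_2$.

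Next I would match the ballotness condition. Pechenik--Yong's general definition asks that, for every way of choosing a single representative cell from each gene, the reading word of the chosen representatives (together with the unique cell of every singleton gene) is ballot. In our case every gene except $\{\ybox_1,\ybox_2\}$ is a singleton, so the only genuine choice is which of $\ybox_1$ or $\ybox_2$ to keep. Keeping $\ybox_1$ and discarding $\ybox_2$ (or vice versa) yields precisely the ordinary reading word of $T$ with that box removed, so ballotness under both choices is exactly condition (iii).

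The main obstacle is the bookkeeping in the second paragraph: Pechenik--Yong's semistandard and gene-ordering axioms are stated for arbitrary gene multiplicities and need to be verified to collapse precisely to (i) and (ii) in the minimal-excess case, with no extra constraints surviving. Once the dictionary between genes and the pair $\{\ybox_1,\ybox_2\}$ is set up and these axioms are checked to specialize correctly, the translation of ballotness into (iii) via representative selection is immediate, and we conclude that $(T,\{\ybox_1,\ybox_2\})$ corresponds to a ballot genomic tableau of $K$-theoretic content $\beta$ with extra entry $i$ if and only if (i)--(iii) hold.
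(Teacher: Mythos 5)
Your strategy is the right one and is, as far as one can tell, the intended argument: this extended abstract states Lemma/Definition \ref{lem:genomic-criterion} without proof, simply asserting it as ``an equivalent characterization'' of Pechenik--Yong's genomic tableaux in the excess-one case, so the proof really is the dictionary you describe (one doubled gene, two genotypes, conditions collapsing to (i)--(iii)). Your identification of (ii) with the gene-consecutivity axiom and of (iii) with ballotness of both genotypes is exactly right.

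One point needs more care: you derive the non-adjacency in (i) from ``semistandardness axioms on genes,'' claiming the two cells of a gene ``cannot share a row- or column-edge in the usual semistandard sense.'' Column-adjacency is indeed killed by ordinary column-strictness of the underlying filling, but row-adjacency is not: two equal entries side by side in a row are perfectly semistandard, and the two genotypes of such a configuration are both ballot whenever the original word is (consider $\alpha=\eset$, $\beta=(1)$, $\gamma^c=(2)$, where the coefficient $k^{\gamma^c}_{\alpha\beta}$ is $0$ yet your reading of the axioms would admit the adjacent pair). The exclusion of horizontally adjacent same-gene boxes comes from a separate clause in Pechenik--Yong's definition governing how the boxes of a single gene may be distributed (they may not occupy the same row), not from semistandardness of the underlying word. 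You should cite and apply that clause explicitly; once you do, the rest of your translation goes through.
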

\begin{thm}[\cite{bib:Pechenik}]
We have $k_{\alpha \beta}^{\gamma^c} = |\Kabg|$.
\end{thm}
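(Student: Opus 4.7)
The plan is to deduce the theorem from Pechenik--Yong's main theorem in \cite{bib:Pechenik} by verifying that the data $(T,\{\ybox_1,\ybox_2\})$ of Lemma/Definition \ref{lem:genomic-criterion} is in bijection with their ballot genomic tableaux specialized to the codimension-one case $|\gamma^c| - |\alpha| - |\beta| = 1$. The Pechenik--Yong theorem states that $k_{\alpha\beta}^{\gamma^c}$ counts genomic fillings of $\gamma^c/\alpha$ of K-theoretic content $\beta$, where a genomic tableau assigns to each box a \emph{gene}, the genes with a given letter $i$ being the ``chromosomes,'' subject to a gene-semistandardness condition together with a genomic ballot condition. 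Hence my first step would be to unpack these definitions and explain why, in codimension one, there is a unique gene occupying more than one cell.

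Since $|\gamma^c/\alpha| - |\beta| = 1$, exactly one letter $i$ has one more cell than its multiplicity in $\beta$; all other genes must be singletons. Thus the underlying filling is an ordinary semistandard tableau $T$ of content $\beta + e_i$, and the genomic data reduces to the choice of a single pair $\{\ybox_1,\ybox_2\}$ of cells labeled $i$ that together form the unique non-trivial chromosome. The next step is to translate the three Pechenik--Yong axioms into conditions (i)--(iii): genomic semistandardness forces non-adjacency of $\ybox_1$ and $\ybox_2$ (they cannot share a row or column without producing a repeated entry in a row or a strict-decrease in a column), the chromosome-connectedness axiom becomes the condition that no other $i$ lies between $\ybox_1$ and $\ybox_2$ in reading order, and the genomic ballot axiom---which requires that \emph{every} selection of one cell per chromosome yield a ballot word---reduces, because all other chromosomes are singletons, to the two-fold ballot condition (iii) on deleting either $\ybox_1$ or $\ybox_2$.

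Having matched the two descriptions, the theorem follows from the Pechenik--Yong enumeration. The main obstacle is the careful bookkeeping of the ballot axiom: one must show that the universal quantifier in the Pechenik--Yong definition (over all choices of representatives in all chromosomes) really collapses to the two conditions (iii), which is only valid because every other gene is a singleton and contributes its letter unambiguously to the reading word. A secondary subtlety is the non-adjacency condition (i), since strict adjacency of $\ybox_1,\ybox_2$ would violate column-strictness (if vertically adjacent) or the underlying semistandardness of $T$ (if horizontally adjacent); one should verify that both cases are genuinely forbidden by the genomic-semistandard axiom rather than permitted in some degenerate fashion. Once these checks are in place the theorem is immediate.
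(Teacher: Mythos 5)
Your overall route is the right one, and it is essentially what the paper does: the theorem itself is quoted from Pechenik--Yong, and the only content the paper adds is the dictionary of Lemma/Definition~\ref{lem:genomic-criterion}, which repackages ballot genomic tableaux in the codimension-one case $|\gamma^c|-|\alpha|-|\beta|=1$ as an ordinary semistandard tableau with one marked pair $\{\ybox_1,\ybox_2\}$. Your reduction of the genomic ballot axiom (quantified over all genotypes) to the two deletions in condition (iii), using that every other gene is a singleton, and of the gene-ordering axiom to condition (ii), is exactly the intended argument.

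However, one step of your dictionary fails as written: you claim horizontal adjacency of $\ybox_1,\ybox_2$ is excluded because ``a repeated entry in a row'' would violate the underlying semistandardness of $T$. It would not: semistandard tableaux (as in the paper, following Fulton) have \emph{weakly} increasing rows, so two adjacent equal entries in a row are perfectly legal. If horizontal adjacency were permitted, the filling of the shape $(2)$ by two $1$'s forming a single gene would be a ballot genomic tableau of $K$-theoretic content $(1)$, giving $k_{\eset\,(1)}^{(2)}=1$ rather than the correct value $0$. The exclusion must instead be imported from Pechenik--Yong's gene axiom, which forbids two boxes of one gene from sharing a row (as well as a column); and conversely you need to check that the paper's apparently weaker condition (i) (mere non-adjacency), combined with (ii), recovers that axiom: if $\ybox_1,\ybox_2$ lay in the same row but were not adjacent, weak row-increase would force every intermediate box of that row to contain $i$, contradicting (ii). With that repair the two descriptions match and the theorem follows at once from the Pechenik--Yong enumeration, as you say.
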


We now show how $\lesh$ generates genomic tableaux. It turns out that each tableau in  $K(\gamma^c/\alpha;\beta)$ arises once during some step of Phase 1 and once during Phase 2, for some $T_1, T_2 \in \LRyb$. 
It is not hard to show, using Lemmas \ref{lem:pieri-jumps-yamanouchi} and \ref{lem:genomic-criterion}, that they arise from the non-horizontal and non-vertical jumps in the evacu-shuffle path, as follows.

%Let $(\ybox,T) \in \LR_\eset^\rect(\alpha,\beta,\ybox,\gamma)$. Suppose the evacu-shuffle path for $\lesh(\ybox,T)$ is not connected. This occurs if there is a (non-vertical) Pieri jump in Phase 1, or a (non-horizontal) $\cpieri$ in Phase 2 during which the $\ybox$ changes rows. Let $\ybox_1, \ybox_2$ be two successive non-adjacent squares in the path. If the movement occurred in the $i$-th step,  We will show using Lemma \ref{lem:genomic-criterion} that $(T_i,\{\ybox_1, \ybox_2\})$ corresponds to a genomic tableau.

\begin{thm} \label{thm:generating-ktheory}
Let $\ybox_1,\ybox_2$ be two successive non-adjacent squares in the evacu-shuffle path of $(\ybox,T)$ in which the $\ybox$ switches with an $i$.  Let $T_i$ be the tableau before this step in the path, with the $\ybox$ replaced by $i$.  Then the data $(T_i,\ybox_1,\ybox_2)$ corresponds to a ballot genomic tableau $T_K$, as in Lemma \ref{lem:genomic-criterion}.

Moreover, as $T$ ranges over $\LRyb$, every tableau $T_K \in K(\gamma^c/\alpha;\beta)(i)$ arises exactly once this way in Phase 1 and once more in Phase 2. This gives two bijections:

\begin{center} 
\begin{tabular}{l c l}
$\varphi_1$ : & $\big\{\pieri_i \text{ moves} \big\}$ & $\to\ K(\gamma^c/\alpha;\beta)(i),$ \\
$\varphi_2$ : & $\big\{\cpieri_j \text{ moves past } i\big\}$ & $\to\ K(\gamma^c/\alpha;\beta)(i)$.
\end{tabular}
\end{center}

\end{thm}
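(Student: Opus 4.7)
My plan has two parts: verifying that each non-adjacent jump produces a ballot genomic tableau (the forward direction), then constructing an explicit inverse to establish the bijections.

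Part 1 (Forward direction). Consider a non-adjacent $\pieri_i$ jump in Phase 1, moving $\ybox$ from $\ybox_2$ (old position) to $\ybox_1$ (new position), and let $T_i$ be the tableau just before the jump with $\ybox \mapsto i$. Condition (i) of Lemma \ref{lem:genomic-criterion} holds by hypothesis (non-adjacency) and construction (both squares carry $i$); condition (ii) follows from the greedy ``nearest prior $i$'' rule defining the jump. For condition (iii), deleting $\ybox_2$ from $T_i$ recovers the pre-jump tableau with $\ybox$ removed, and deleting $\ybox_1$ recovers the post-jump tableau with $\ybox$ removed; both are ballot by Lemma \ref{lem:pieri-jumps-yamanouchi}. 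The analogous argument for non-adjacent $\cpieri_j$ jumps past $i$ in Phase 2 is symmetric, with ``prior'' and ``after'' exchanged.

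Part 2 (Bijection). Given a ballot genomic tableau $(T_K, \{\ybox_1, \ybox_2\})$ with extra entry $i$, I would recover its $\varphi_1$-preimage by running $\lesh$ backwards. The extra label $i$ pins down the algorithm step, and ballotness of $T_K \setminus \ybox_1$ and $T_K \setminus \ybox_2$ together with condition (ii) force the ``current $\ybox$'' to be the later of the two squares in reading order, yielding the intermediate state $T_i$. I then reverse the first $i-1$ Phase 1 moves: at each reversed step $j$, push $\ybox$ to the nearest \emph{later} $j$ in reading order (undoing a $\pieri_j$) or one square up (undoing a vertical slide), until we reach an element $T \in \LRyb$. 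Determinism of forward $\lesh$ ensures this $T$ is unique (injectivity), while surjectivity reduces to showing the reverse procedure always terminates at a valid element of $\LRyb$. The Phase 2 bijection $\varphi_2$ is handled analogously, or alternatively by invoking the antidiagonal symmetry of Figure \ref{fig:antidiagonal} and the $s$-decomposition machinery from the proof of Theorem \ref{thm:main-theorem}, which exchanges Phase 1 horizontal-strip structure with Phase 2 vertical-strip structure.

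The main obstacle I foresee lies in Part 2: showing that the reverse procedure is well-defined at every intermediate state. Concretely, one must show that from the tableau alone one can unambiguously identify whether the preceding forward move was a regular Pieri jump, a vertical slide, or the designated non-adjacent jump at step $i$; and that the reversed state remains semistandard and ballot so that Lemma \ref{lem:pieri-jumps-yamanouchi} continues to apply inductively. I expect this will follow from a careful local analysis near the $\ybox$ — mirroring the forward validity analysis of $\lesh$ — combined with the ballot structure imposed by condition (iii) of Lemma \ref{lem:genomic-criterion}.
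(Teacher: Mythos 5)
Your Part~1 is essentially the paper's argument: conditions (i)--(iii) of Lemma~\ref{lem:genomic-criterion} are read off from the definition of the jump (non-adjacency for (i), the ``nearest prior/next $i$'' rule plus condition (ii), and Lemma~\ref{lem:pieri-jumps-yamanouchi} for (iii)). One omission: Lemma~\ref{lem:genomic-criterion} also requires $T_i$ itself to be \emph{semistandard}, and Lemma~\ref{lem:pieri-jumps-yamanouchi} only controls the tableau with the $\ybox$ omitted; you still owe the (short) check that writing $i$ into the square currently occupied by the $\ybox$ preserves the row and column inequalities. Also, it is the \emph{direction} of a Phase 1 move, not ballotness, that forces the $\ybox$ onto the later of the two marked squares in reading order --- both deletions are ballot symmetrically by (iii), so ballotness alone does not ``pin down'' which square holds the $\ybox$.

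The genuine gap is in Part~2, and you have located it yourself but not closed it. Your step-by-step reversal is circular as proposed: to undo step $j$ you must already know that the state you are reversing \emph{from} is a legitimate intermediate state of $\lesh$ applied to some element of $\LRyb$ (so that Lemma~\ref{lem:pieri-jumps-yamanouchi} applies and the reversed move is the unique forward-inverse), but that is exactly what needs to be proved for the candidate state built from an arbitrary genomic tableau. The missing ingredient is a \emph{static characterization} of the intermediate states: the set $X_i$ of tableaux occurring just before the $\ybox$ interacts with the $i$'s consists of \emph{all} punctured ballot semistandard tableaux of shape $\gamma^c/\alpha$ and content $\beta$ with the $\ybox$ between the $(i-1)$-st and $i$-th horizontal strips (this is the lemma $X_i = X_i'$ in Section~\ref{sec:K-theory}, proved via invertibility of jeu de taquin and of each step of $\lesh$). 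With that in hand no unwinding is needed: given $T_K \in \Kabg(i)$, place the $\ybox$ at the later marked square, verify from (i)--(iii) that the result lies in $X_i$ and that the forward step from it is precisely the prescribed non-adjacent jump (condition (ii) guarantees the nearest prior $i$ is the other marked square), and pull back along the bijection $\LRyb = X_1 \to X_i$; injectivity is the same statement read forwards. Your appeal to the antidiagonal symmetry and the $s$-decomposition for $\varphi_2$ is the right instinct, but it rests on the analogous characterization of the Phase 2 intermediate states, so the same point must be supplied there as well.
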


%\subsubsection{Numerical properties of \texorpdfstring{$\omega$}{w}}

%We recall the statements about $\omega$ known from geometry:
%\begin{align}
%|\Kabg| &\geq |\LRyb| - |\mathrm{orbits}(\omega)|, \label{eqn:recall-ineq}\\
%|\Kabg| &= \mathrm{sgn}(\omega) \pmod 2. \label{eqn:recall-parity}
%\end{align}
%(We use the convention that $\mathrm{sgn}(\omega) \in \mathbb{Z}/2$. Note that the right-hand sides of Equations \eqref{eqn:recall-ineq} and \eqref{eqn:recall-parity} are the same, mod $2$.)

%We will shortly give a purely combinatorial proof of the second statement, using the bijection $\varphi_1$ of Theorem \ref{thm:generating-ktheory} to count ballot genomic tableaux. For the first statement, numerical evidence suggests that, using either $\varphi_1$ or $\varphi_2$, the inequality in fact holds orbit-by-orbit (see Figure \ref{fig:numerical-evidence}).
%\begin{conjecture} \label{conj:orbit-by-orbit}
%Let $\mathcal{O} \subseteq \LRyb$ be an orbit of $\omega$. Let $K_1(\mathcal{O}), K_2(\mathcal{O})$ denote the sets of genomic tableaux occuring in this orbit in Phases 1 and 2 (via the bijections $\varphi_1, \varphi_2$). Then
%\[|K_i(\mathcal{O})| \geq |\mathcal{O}| - 1 \qquad (\text{for } i = 1, 2).\]
%\end{conjecture}

\subsection{The sign and reflection length of \texorpdfstring{$\omega$}{w}}

We now compute the sign of $\omega = \sh \circ \esh$, as a permutation of $\LRyb$, and the bound \eqref{eqn:ktheory-ineq} on its reflection length. We show:

\begin{thm} \label{thm:parity}
We have $\mathrm{rlength}(\omega) \leq |\Kabg|$ and $\mathrm{sgn}(\omega) = |\Kabg| \pmod 2$.
\end{thm}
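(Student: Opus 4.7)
The plan is to derive both assertions as bookkeeping consequences of the factorization $\omega = \omega_{\ell(\beta)} \cdots \omega_{1}$ provided by Theorem \ref{thm:intro-parity}, together with two standard facts about permutations. First, reflection length is subadditive under composition: $\mathrm{rlength}(\pi\sigma) \leq \mathrm{rlength}(\pi) + \mathrm{rlength}(\sigma)$. Second, for a permutation $\pi$ with orbits of sizes $m_1, \ldots, m_r$, one has $\mathrm{rlength}(\pi) = \sum_j (m_j - 1)$ and $\mathrm{sgn}(\pi) = (-1)^{\mathrm{rlength}(\pi)}$, since a cycle of length $m$ factors as a product of exactly $m-1$ transpositions.

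The key computation is the reflection length of each individual $\omega_i$. By the orbit-to-tableau correspondence asserted in Theorem \ref{thm:intro-parity}, each orbit $\mathcal{O}$ of $\omega_i$ contributes $|\mathcal{O}|-1$ distinct K-theoretic tableaux; these should be exactly the elements of $K(\gamma^c/\alpha;\beta)(i)$ via the bijection $\varphi_1$ of Theorem \ref{thm:generating-ktheory}. Summing over the orbits of $\omega_i$ therefore yields
\[\mathrm{rlength}(\omega_i) = \sum_{\mathcal{O}} (|\mathcal{O}| - 1) = |K(\gamma^c/\alpha;\beta)(i)|.\]
Since $\Kabg = \bigsqcup_i K(\gamma^c/\alpha;\beta)(i)$, applying subadditivity to the factorization of $\omega$ gives
\[\mathrm{rlength}(\omega) \leq \sum_{i=1}^{\ell(\beta)} \mathrm{rlength}(\omega_i) = |\Kabg|,\]
which is \eqref{eqn:ktheory-ineq}. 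For the sign statement, use that $\mathrm{sgn}$ is a group homomorphism:
\[\mathrm{sgn}(\omega) = \prod_{i=1}^{\ell(\beta)} \mathrm{sgn}(\omega_i) = \prod_i (-1)^{\mathrm{rlength}(\omega_i)} = (-1)^{|\Kabg|},\]
matching \eqref{eqn:ktheory-mod2}.

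The main obstacle is not this bookkeeping but the underlying Theorem \ref{thm:intro-parity}, which this argument presupposes. The substantive work will be to define the auxiliary operators $\omega_i$ and verify (a) that they actually compose to $\omega$ in the stated order, and (b) that their orbits admit the claimed $(|\mathcal{O}|-1)$-to-one correspondence with $K(\gamma^c/\alpha;\beta)(i)$. The natural candidate is a ``single-level'' variant of $\omega$ that shuffles the $\ybox$ past only the entries equal to $i$, whose non-trivial orbits trace out cycles of $\pieri_i$-jumps across $\LRyb$; each such orbit would then ``spend'' one of its elements to close the cycle, while the remaining $|\mathcal{O}|-1$ yield distinct genomic tableaux under $\varphi_1$. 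The most delicate step will likely be verifying that the composition $\omega_{\ell(\beta)} \cdots \omega_1$ truly equals $\omega$, rather than some related permutation with the same cycle-structure statistics — a coincidence in cycle sizes would suffice for the parity bound but not for the sharper reflection-length inequality.
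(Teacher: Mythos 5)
Your proposal follows essentially the same route as the paper: the authors define $\omega_i$ as the conjugate $s_1\cdots s_{i-1}(s_i\ell_i)s_{i-1}\cdots s_1$ of exactly the ``single-level'' operator you describe ($\ell_i$ moves the $\ybox$ past the $i$'s only, $s_i$ is the jeu de taquin shuffle back), so that the product telescopes to $\omega = s_1\cdots s_t\,\ell_t\cdots\ell_1 = \sh\circ\lesh$, and then Theorem \ref{thm:ktheory-little-orbits} supplies precisely the orbit count $\mathrm{rlength}(\omega_i)=\sum_{\mathcal{O}}(|\mathcal{O}|-1)=|K(\gamma^c/\alpha;\beta)(i)|$ via $\varphi_1$, with the one ``special jump'' per nontrivial orbit failing to produce a genomic tableau. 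You correctly identify both delicate points (that the composition genuinely equals $\omega$, and the $(|\mathcal{O}|-1)$-to-one orbit correspondence), and the bookkeeping via subadditivity of reflection length and multiplicativity of sign matches the paper's argument.
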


\begin{lemma}
Let $X_i$ and $X'_i$, respectively, be the set of all tableaux arising in $\lesh$ and $\sh$, respectively, when the $\ybox$ is between the $(i-1)$-st and $i$-th horizontal strips.  Then $X_i = X'_i$.
\end{lemma}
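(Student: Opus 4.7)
The plan is to proceed by induction on $i$. The base case $i = 1$ is immediate: no horizontal strips have yet been processed in either algorithm, so every $T \in \LRyb$ contributes itself as the initial configuration, giving $X_1 = X'_1 = \LRyb$.

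For the inductive step, the key is to characterize $X_i$ (equivalently $X'_i$) intrinsically. I would introduce a common target set $Y_i$ consisting of all configurations $(\ybox, U)$ of shape $\gamma^c/\alpha$ such that (i) the entries of $U$ of value $\geq i$ form a ballot subtableau of the residual shape cut out by the $s$-decomposition, (ii) the entries of value $< i$ fill the complementary ``already-processed'' region, and (iii) the $\ybox$ sits along the boundary between the two regions. Lemma \ref{lem:pieri-jumps-yamanouchi} ensures that the intermediate tableaux arising in $\lesh$ satisfy these semistandard and ballot constraints, and the analogous assertion for $\sh$ follows from standard properties of jeu de taquin slides on horizontal strips.

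Next, using the Pieri description (Theorem \ref{thm:Pieri-new}), I would verify that the $(i-1)$-st Phase 1 step of $\lesh$ is a bijection $X_{i-1} = Y_{i-1} \to Y_i$: invertibility follows because the Pieri-type swap has a well-defined inverse on the resulting partially-ballot tableau, and surjectivity comes from running that inverse backwards starting from any element of $Y_i$. The analogous statement for $\sh$'s $(i-1)$-st horizontal-strip slide is standard jeu de taquin reversibility. Both bijections then have image exactly $Y_i$, yielding $X_i = X'_i = Y_i$.

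The main obstacle will be pinning down $Y_i$ precisely enough that both algorithms are visibly seen to surject onto it, since $\lesh$ performs only a single swap per horizontal strip while $\sh$ performs $\beta_{i-1}$ slides past all entries of value $i-1$. The cleanest route I see is to leverage the correspondence from Section \ref{sec:main-result} between $\lesh$ and the rectified promotion defining $\esh$: intermediate Phase 1 states of $\lesh$ should pull back to intermediate promotion states in the rectified tableau $R$ of Figure \ref{fig:R-diagram}, and these in turn project back to intermediate $\sh$ states on the skew side via the $s$-decomposition. If this identification is natural in $T$, the equality $X_i = X'_i$ follows directly and avoids any explicit case analysis.
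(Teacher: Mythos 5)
Your proposal is essentially the paper's argument: identify a common intrinsic set $Y_i$ containing all intermediate states, show containment via Lemma \ref{lem:pieri-jumps-yamanouchi} (for $\lesh$) and preservation of ballotness under jeu de taquin (for $\sh$), and then get equality from invertibility of both operations. The paper's version is leaner than what you sketch --- the common set is simply the punctured semistandard tableaux of content $\beta$ and shape $\gamma^c/\alpha$ with ballot reading word and the $\ybox$ between the $(i-1)$-st and $i$-th horizontal strips, so neither the induction, nor the $s$-decomposition in the definition of $Y_i$, nor the fallback through the rectified promotion is needed.
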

\begin{proof}
Both sets consist of `punctured' semistandard tableaux of content $\beta$ and shape $\gamma^c / \alpha$, with ballot reading word, and where the $\ybox$ is between the $(i-1)$-st and $i$-th horizontal strips. (It is well-known that ballotness is preserved by jeu de taquin slides. Ballotness is also preserved during $\lesh$ by Lemma \ref{lem:pieri-jumps-yamanouchi}.) Both shuffling and evacuation-shuffling are invertible, so every such tableau arises in $X_i$ and $X'_i$.
\end{proof}

We have $X_1 = \LRyb$ and we write $X_{t+1} = \LRby$, where $t = \ell(\beta)$. For $1 \leq i \leq t$, we let $\ell_i : X_i \to X_{i+1}$ be the composition of the steps of $\lesh$ that switch the $\ybox$ with $i$'s. Let $s_i : X_{i+1} \to X_i$ be the jeu de taquin shuffle. We have the diagram

\[\xymatrix{
X_1 \ar@/_10pt/[r]_-{\ell_1} &
X_2 \ar@/_10pt/[r]_-{\ell_2} \ar@/_10pt/[l]_-{s_1}&
X_3 \ar@/_10pt/[r]_-{\ell_3} \ar@/_10pt/[l]_-{s_2} &
\cdots \ar@/_10pt/[r]_-{\ell_t} \ar@/_10pt/[l]_-{s_3} &
X_{t+1} \ar@/_10pt/[l]_-{s_t},
}\]
By definition, $\omega = \sh \circ \lesh = s_1 \circ \cdots \circ s_t \circ \ell_t \circ \cdots \circ \ell_1.$ Hence, for computing signs, we may rearrange:
\[\mathrm{sgn}(\omega) = \sum_{i=1}^t \mathrm{sgn}(s_i \circ \ell_i)\pmod{2}\]
The operators $\omega_i$ of Theorem \ref{thm:intro-parity} are the compositions $\omega_i=s_1\cdots s_{i-1} (s_i \ell_i) s_{i-1}\cdots s_1.$ We have $\omega = \omega_t \cdots \omega_1$, and since reflection length is subadditive,
\[\mathrm{rlength}(\omega) \leq \sum_{i=1}^t \mathrm{rlength}(\omega_i) = \sum_{i=1}^t \mathrm{rlength}(s_i \circ \ell_i).\]
We complete Theorem \ref{thm:parity} by describing the orbits of $s_i \circ \ell_i$, a computation interesting in its own right:
\begin{thm}\label{thm:ktheory-little-orbits}
Let $\mathrm{orb}_i$ be the set of orbits of $s_i \circ \ell_i$. Then:
\[\mathrm{rlength}(\omega_i) = \sum_{\mathcal{O} \in \mathrm{orb}_i} (|\mathcal{O}| - 1) = |K(\gamma^c/\alpha; \beta)(i)|.\]
\end{thm}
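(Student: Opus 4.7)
The first equality $\mathrm{rlength}(\omega_i) = \sum_{\mathcal{O}}(|\mathcal{O}|-1)$ is a general fact about permutations of finite sets: any $k$-cycle factors minimally as a product of $k-1$ transpositions, and reflection length is additive across disjoint cycles. Since $\omega_i$ and $s_i \circ \ell_i$ are conjugate in $\mathrm{Sym}(X_i)$ by $s_{i-1}\circ\cdots\circ s_1$, they share a cycle structure, hence the same reflection length. I would record this as a brief preliminary observation.

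For the second equality, the plan is to exploit the bijection $\varphi_1$ from Theorem \ref{thm:generating-ktheory}. Each $T \in \LRyb$ corresponds bijectively under $\ell_{i-1}\circ\cdots\circ \ell_1$ to some $T' \in X_i$, and the $\pieri_i$ jump in $\lesh(T)$ (if any) occurs precisely as the first step of $\ell_i$ acting on $T'$. Writing $P_i \subseteq X_i$ for the set of tableaux at which $\ell_i$ begins with a non-adjacent down-and-left swap with an $i$, the bijection $\varphi_1$ yields $|K(\gamma^c/\alpha;\beta)(i)| = |P_i|$. The target equality thus reduces to the claim that each orbit $\mathcal{O}$ of $s_i \circ \ell_i$ contains exactly $|\mathcal{O}|-1$ elements of $P_i$, equivalently, exactly one distinguished element $T^* \in \mathcal{O}\setminus P_i$.

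To prove this one-per-orbit claim, I would localize to the $\ybox$ and the $i$-strip: both $\ell_i$ and $s_i$ leave all non-$i$ entries of a tableau in $X_i$ fixed, so the action of $\tau_i = s_i \circ \ell_i$ on any orbit is governed by a local Pieri-type subproblem on a single horizontal $i$-strip paired with the $\ybox$. In this reduced setting, Theorem \ref{thm:Pieri-new} describes the orbit structure directly: the $\ybox$ cycles around the $i$-strip via successive Pieri jumps, with exactly one ``special jump'' configuration per orbit. That special-jump configuration is the distinguished element $T^* \in \mathcal{O}\setminus P_i$; the other $|\mathcal{O}|-1$ elements each perform a genuine $\pieri_i$ jump under $\ell_i$ and thus lie in $P_i$. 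Summing $|\mathcal{O}|-1$ across orbits then gives $|P_i| = \sum_{\mathcal{O}}(|\mathcal{O}|-1)$, as desired.

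\textbf{Main obstacle.} The hardest step is rigorously justifying the localization. One must verify that the non-$i$ entries, though frozen, impose semistandard and ballot constraints (via Lemma \ref{lem:pieri-jumps-yamanouchi}) that exactly mirror those of the pure Pieri case, so that Theorem \ref{thm:Pieri-new} applies verbatim to each orbit. A particularly subtle point is the handling of Phase 2 steps, whose $\cpieri$ and $\horiz$ moves have no direct analogue in the one-row Pieri case; these extra moves must be shown not to disrupt the cycle structure predicted by the local Pieri analysis. Once this reduction is secured, the counting is immediate and completes the theorem.
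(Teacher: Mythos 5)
Your skeleton is the same as the paper's: dispose of the first equality by conjugation-invariance of cycle type, use $\varphi_1$ to identify $|K(\gamma^c/\alpha;\beta)(i)|$ with the number of tableaux in $X_i$ on which $\ell_i$ performs a genuine $\pieri_i$ jump, and then show each orbit of $s_i\circ\ell_i$ contains exactly one element outside that set. The gap is in how you propose to prove the one-per-orbit claim. You assert that freezing the non-$i$ entries reduces each orbit to a local Pieri subproblem to which Theorem \ref{thm:Pieri-new} applies \emph{verbatim}; it does not, and the paper is explicit about the discrepancy. The non-$i$ entries do not merely impose constraints ``mirroring'' the one-row case --- they change where the $\ybox$ stops: in Phase 1 the $\ybox$ descends through the $i$'s only until the $(i-1,i)$ suffix from the $\ybox$ becomes tied (not until it leaves the strip), and in Phase 2 it jumps up only far enough to make the $(i,i+1)$ suffix tied (not to the \emph{last} $i$ in reading order, as the special jump of Theorem \ref{thm:Pieri-new} would dictate). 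So the orbits of $s_i\circ\ell_i$ are only \emph{analogous} to Pieri orbits: the $\ybox$ cycles through a sub-interval of the $i$-strip cut out by these tie conditions, and the claim that each orbit contains exactly one non-$\pieri_i$ element must be re-proved in this truncated setting rather than imported from the one-row case. Your own ``main obstacle'' paragraph correctly locates the difficulty but proposes to resolve it by showing the constraints ``exactly mirror'' the Pieri case, which is false.

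A second, smaller omission concerns the singleton orbits. You describe the distinguished element $T^*\in\mathcal{O}\setminus P_i$ as ``the special-jump configuration,'' but fixed points of $s_i\circ\ell_i$ involve no special jump at all. The paper's argument for these is different in kind: when $\ell_i$ is a single $\vertical_i$ slide, or a sequence consisting only of $\horiz$ slides, these moves literally coincide with jeu de taquin slides, so $\ell_i=s_i^{-1}$ and $T$ is fixed, contributing $0=|\mathcal{O}|-1$ genomic tableaux. This observation is what makes the fixed-point case close, and it does not follow from your localization picture. The first equality (reflection length equals $\sum_{\mathcal{O}}(|\mathcal{O}|-1)$ via the cycle decomposition and conjugation) is correct and matches what the paper uses implicitly.
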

\begin{proof}[sketch]
We use the bijection $\varphi_1$ of Theorem \ref{thm:generating-ktheory} to generate genomic tableaux. Let $T \in X_i$.

First, suppose $\ell_i$ applies a Phase 1 vertical slide, or a sequence of Phase 2 moves consisting only of horizontal slides. Both of these steps are equivalent to jeu de taquin slides, and so $\ell_i(T) = s_i^{-1}(T)$. Thus $T$ is a fixed point of $\omega_i$ and does not contribute to the sum; it also does not generate a genomic tableau.

Otherwise, the orbit containing $T$ is similar in form to the Pieri case: all steps but one move the $\ybox$ downwards one row within the strip of $i$'s, generating one genomic tableau each. The last step begins in Phase 2 (it is a `special jump'), hence it does not generate a genomic tableau. (Unlike the Pieri case, the $\ybox$ moves downwards only until the $(i-1,i)$ suffix becomes tied, and `jumps' only far enough upwards to make the $(i,i+1)$ suffix tied.)

Thus each $\mathcal{O} \in \mathrm{orb}_i$ generates $|\mathcal{O}| - 1$ genomic tableaux. Every tableau of $K(\gamma^c/\alpha; \beta)(i)$ arises once in Phase 1, so we are done.
%
%Next, it is easy to see that $\ell_i$ applies a Phase 1 move if and only if the following conditions hold:
%\begin{itemize}
%\item The suffix from $\ybox$ in $T$ is not tied for $(i-1,i)$, and
%\item There is an $i$ before the $\ybox$ in the reading word of $T$.
%\end{itemize}
%The first condition implies that the $(i-1)$-st step of $\lesh$ was in Phase 1; the second rules out the transition to Phase 2.
%
%We now analyze the orbits of $s_i \circ \ell_i$. If either of the above conditions fails, $\ell_i$ moves the $\ybox$ to the first $i$ after it in reading order for which the $(i,i+1)$ suffix is tied; then $s_i$ moves it to the start of that row of $i$'s. Otherwise, $s_i \circ \ell_i$ applies a Pieri move on the horizontal strip of $i$'s. Thus the $\ybox$ moves downwards in this strip, one row at a time, until one of the conditions fails.  Since $\omega_i$ is a bijection, it follows that the orbit enters a cycle at this point.
%
%Thus every orbit has a form similar to that of the Pieri case: one ``special jump'' does not generate a genomic tableau; all other downward steps generate exactly one each. Thus for each $\mathcal{O} \in \mathrm{orb}_i$ we have $|K(\mathcal{O})| = |\mathcal{O}| - 1$. Since every tableau of $K(\gamma^c/\alpha; \beta)(i)$ arises once in Phase 1, we are done.
\end{proof}
%
%We can now also provide a combinatorial proof of the inequality \ref{eqn:ktheory-ineq}.
%
%\begin{corollary}
%  We have \[|\Kabg| \geq |\LRby| - |\mathrm{orbits}(\omega)|\]
%and $\mathrm{sign}(\omega) = |\Kabg|$.
%\end{corollary}
%
%\begin{proof}
%  The quantity $|\LRby|-|\mathrm{orbits}(\omega)|$ is the \newword{reflection length} of $\omega$, the minimum number of transpositions (permutations that switch two elements and fix the rest) needed to factor $\omega$ as a composition of transpositions.  Writing $\mathrm{rlength}(\pi)$ to denote the reflection length of a permutation $\pi$, we have that  
%  $$|K(\gamma^c/\alpha; \beta)|=\sum_{i}\sum_{\mathcal{O} \in \mathrm{orb}_i} (|\mathcal{O}| - 1)=\sum_{i}\mathrm{rlength}(\omega_i)\ge \mathrm{rlength}(\omega)$$ which completes the proof.
%\end{proof}
%
\subsection{Fixed points of \texorpdfstring{$\omega$}{w}} \label{sec:omega-orbits}
%%%%%%%%%%%%%%%%%%%%
%%% FIXED POINTS %%%
%%%%%%%%%%%%%%%%%%%%

We also characterize the fixed points of $\omega$:
%, and conclude that if $\omega$ is the identity then the complex curve $S$ is a disjoint union of $\mathbb{P}^1$'s.

\begin{proposition}\label{prop:fixed-points}
The fixed points of $\omega$ are the pairs $(\ybox,T)$ satisfying the (equivalent) conditions:
\begin{itemize}
%\item[(i)] $\omega(T) = T$.
\item[(i)] In the computation of $\lesh(\ybox, T)$, neither bijection $\varphi_1, \varphi_2$ generates a genomic tableau.
\item[(ii)] The evacu-shuffle path of the $\ybox$ is connected.
\end{itemize}
\end{proposition}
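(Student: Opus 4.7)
My plan is to prove the proposition in three parts: the equivalence $(i) \Leftrightarrow (ii)$, then $(ii) \Rightarrow$ ``$(\ybox,T)$ is fixed by $\omega$,'' and finally the converse.

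The equivalence $(i) \Leftrightarrow (ii)$ follows directly from Theorem~\ref{thm:generating-ktheory}.  A gap in the evacu-shuffle path is by definition a pair of consecutive non-adjacent squares, which is exactly a $\pieri_i$ or $\cpieri_j$ move (since vertical and horizontal slides move the $\ybox$ to an adjacent square).  Theorem~\ref{thm:generating-ktheory} exhibits $\varphi_1$ and $\varphi_2$ as bijections from precisely these non-slide moves to the ballot genomic tableaux in $\Kabg$, so the path is connected iff neither bijection generates a genomic tableau.

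For $(ii) \Rightarrow$ fixed, I would use the observation from the proof of Theorem~\ref{thm:ktheory-little-orbits}: when $\ell_i$ consists only of slides on its input, it coincides with the jeu de taquin slide $s_i^{-1}$ on that input.  Letting $E_i$ denote the intermediate state of $\lesh$ after step $i$, condition $(ii)$ implies by induction that $E_i = s_i^{-1} \cdots s_1^{-1}(\ybox,T)$ for all $i$.  Setting $i=t$ gives $\lesh(\ybox,T) = \sh^{-1}(\ybox,T)$, and therefore $\omega(\ybox,T) = \sh \circ \sh^{-1}(\ybox,T) = (\ybox,T)$.

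The main difficulty is the converse, fixed $\Rightarrow (i)$, which I plan to prove by contrapositive.  Setting $F_i = s_i^{-1} \cdots s_1^{-1}(\ybox,T)$, the fixed-point condition is equivalent to $E_t = F_t$.  If $(i)$ fails, let $i_0$ be the first step at which $\ell_{i_0}$ uses a $\pieri$ or $\cpieri$ jump: then $E_j = F_j$ for $j < i_0$ by the previous paragraph, but $\ell_{i_0}$ and $s_{i_0}^{-1}$ perform different transpositions on the common state $E_{i_0-1} = F_{i_0-1}$, so $E_{i_0} \neq F_{i_0}$.  The hard step is to show this divergence persists, i.e., $E_t \neq F_t$.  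My plan is to invoke the factorization $\omega = \omega_t \cdots \omega_1$ from Theorem~\ref{thm:intro-parity} together with the Pieri-like orbit structure of each $\omega_i$ from Theorem~\ref{thm:ktheory-little-orbits}: since the non-trivial orbits of $\omega_i$ cycle the $\ybox$ through a prescribed set of rows of the $i$-strip, and the $\omega_j$ for $j \neq i_0$ act on essentially disjoint strips, the characteristic row/column shift of the $\ybox$ introduced by the $\pieri_{i_0}$ or $\cpieri_{i_0}$ jump cannot be cancelled by any composition of subsequent $\omega_j$-actions.  Hence $E_t \neq F_t$, contradicting $\omega$-fixedness and completing the proof.
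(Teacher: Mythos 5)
Your equivalence $(i)\Leftrightarrow(ii)$ is correct and is exactly what Theorem~\ref{thm:generating-ktheory} provides: the gaps in the path are precisely the $\pieri$ and $\cpieri$ moves, which are precisely the moves sent by $\varphi_1,\varphi_2$ to genomic tableaux. Your argument for ``connected path $\Rightarrow$ fixed'' is also sound: it rests on the observation (stated in the proof of Theorem~\ref{thm:ktheory-little-orbits}) that a slide-only step $\ell_i$ coincides with $s_i^{-1}$ on its input, so by induction $\lesh(\ybox,T)=\sh^{-1}(\ybox,T)$ and $\omega$ fixes $(\ybox,T)$. (Note that this extended abstract defers the proof of the proposition to the full paper, so there is no in-text proof to compare against; I am assessing your argument on its own terms.)

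The converse, however, is not proved. You correctly reduce it to showing that $E_{i_0}\neq F_{i_0}$ propagates to $E_t\neq F_t$, but the mechanism you invoke does not exist as stated. The operators $\omega_j=s_1\cdots s_{j-1}(s_j\ell_j)s_{j-1}^{-1}\cdots s_1^{-1}$ are all permutations of the \emph{same} set $\LRyb$, and each one moves the $\ybox$ and rearranges entries throughout the tableau via the conjugating shuffles; they do not act on ``disjoint strips'' in any sense that would localize their effects. More fundamentally, a fixed point of a composition $\omega_t\cdots\omega_1$ need not be a fixed point of each factor, and the natural way to force this --- a statistic weakly monotone under every $\omega_i$ and strictly increased by a jump --- cannot exist, since by Theorem~\ref{thm:ktheory-little-orbits} each $\omega_i$ has nontrivial cycles, on which any monotone statistic must be constant. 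Nor does the geometry of the displacements help by itself: a $\pieri_{i_0}$ jump pushes the $\ybox$ down-and-left relative to a slide, but later $\cpieri$ jumps push it up-and-right, and you give no invariant ruling out that these displacements, composed with the final shuffle $\sh$, restore the original pair $(\ybox,T)$. (Since distinct elements of $\LRby$ can share the same $\ybox$ position, tracking the box alone cannot close this gap either.) To complete the proof you need a genuine comparison of the two trajectories --- e.g.\ a refined local description of $\sh$ run in parallel with $\lesh$, exhibiting a quantity that differs between $\omega(\ybox,T)$ and $(\ybox,T)$ as soon as the first gap occurs --- and that is precisely the content your sketch is missing.
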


\begin{corollary}\label{cor:w=id}
Suppose $\omega$ acts on $\LRyb$ as the identity. Then $\Kabg = \eset$; it follows that the curve $S(\alpha, \beta, \gamma)$ is (over $\mathbb{C}$) a disjoint union of $\mathbb{P}^1$'s, and the map $S \to \mathbb{P}^1$ of Theorem \ref{thm:intro-2} is locally an isomorphism.
\end{corollary}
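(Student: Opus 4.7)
The plan splits into a combinatorial step (deducing $\Kabg = \eset$) followed by an algebraic-geometric step.

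First, I would establish $\Kabg = \eset$. Under $\omega = \id$, every $(\ybox, T) \in \LRyb$ is a fixed point, so Proposition \ref{prop:fixed-points}(i) implies that no step of $\lesh(\ybox, T)$ generates a genomic tableau via $\varphi_1$ or $\varphi_2$. But Theorem \ref{thm:MainResult2} (in the more precise form of Theorem \ref{thm:generating-ktheory}) provides a bijection $\varphi_1$ from the set of all $\pieri_i$-moves (over all starting tableaux) onto $\Kabg(i)$, for each $i$. Since each domain is empty, $\Kabg(i) = \eset$ for every $i$, hence $\Kabg = \eset$.

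Next, I would chain the numerical consequences. The K-theoretic identity $\chi(\mathcal{O}_S) = |\LRyb| - |\Kabg|$ gives $\chi(\mathcal{O}_S) = |\LRyb|$, while $\eta(S) = |\LRyb|$ since $\omega = \id$ has $|\LRyb|$ singleton orbits. Proposition \ref{prop:numerics} then forces $\eta(S) = \iota(S) = \chi(\mathcal{O}_S) = |\LRyb|$. Using the reducedness of $S$ (cited from \cite{bib:Levinson}), write $\chi(\mathcal{O}_S) = \sum_i (1 - p_a(S_i))$ over the $\iota(S)$ irreducible components, each with $p_a(S_i) \geq 0$. The equality $\chi(\mathcal{O}_S) = \iota(S)$ forces $p_a(S_i) = 0$ for every $i$, and the decomposition $p_a = g + \delta$ (geometric genus plus singularity $\delta$-invariant) gives $g = \delta = 0$, so each $S_i$ is smooth of genus $0$, hence $S_i \cong \mathbb{P}^1$.

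Finally, the map $\pi : S \to \mathbb{P}^1$ is finite flat of degree $|\LRyb|$ in this setting (standard for osculating Schubert curves, matching the real covering degree of Theorem \ref{thm:intro-2}). Writing the degree as $\sum_i d_i$ over the components, finiteness forces $d_i \geq 1$; combined with $\sum d_i = |\LRyb| = \iota(S)$, this gives $d_i = 1$ for each $i$. A degree-$1$ morphism $\mathbb{P}^1 \to \mathbb{P}^1$ is an isomorphism, so $\pi$ is a disjoint union of isomorphisms, hence locally an isomorphism. The most delicate part is the middle paragraph: converting $\chi(\mathcal{O}_S) = \iota(S)$ into \emph{smoothness} of each component (not merely $p_a = 0$) requires invoking the $p_a = g + \delta$ decomposition and the reducedness of $S$, going beyond the purely combinatorial framework of the earlier sections.
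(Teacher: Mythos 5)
Your route is essentially the paper's: Proposition \ref{prop:fixed-points} together with the bijections of Theorem \ref{thm:generating-ktheory} give $\Kabg=\eset$ (indeed, since $\omega=\id$ makes every element of $\LRyb$ a fixed point, the domain of $\varphi_1$ is empty and hence so is each $\Kabg(i)$), and then $\chi(\mathcal{O}_S)=|\LRyb|-|\Kabg|$ combined with Proposition \ref{prop:numerics} forces $\eta(S)=\iota(S)=\chi(\mathcal{O}_S)=|\LRyb|$. The combinatorial half of your argument is correct and is exactly the intended one.

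The one genuine gap is in your middle paragraph. You write $\chi(\mathcal{O}_S)=\sum_i\bigl(1-p_a(S_i)\bigr)$ over the irreducible components, but that identity already presupposes the components are pairwise disjoint --- which is part of what the corollary asserts (a \emph{disjoint} union of $\mathbb{P}^1$'s) and so cannot be assumed. For a reduced projective curve one only has $\chi(\mathcal{O}_S)\le\sum_i\chi(\mathcal{O}_{S_i})$, via the sequence $0\to\mathcal{O}_S\to\bigoplus_i\mathcal{O}_{S_i}\to\mathcal{Q}\to 0$ with $\mathcal{Q}$ supported on the pairwise intersections; equivalently, $\chi(\mathcal{O}_S)=h^0(\mathcal{O}_S)-h^1(\mathcal{O}_S)\le h^0(\mathcal{O}_S)\le\iota(S)$, where $h^0(\mathcal{O}_S)$ counts connected components. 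The equality $\chi(\mathcal{O}_S)=\iota(S)$ then forces \emph{both} $h^1(\mathcal{O}_S)=0$ \emph{and} $h^0(\mathcal{O}_S)=\iota(S)$; the latter says every connected component is irreducible, which is where disjointness actually comes from, and the former gives $p_a(S_i)=0$ for each $i$, whence your $p_a=g+\delta$ argument yields $S_i\cong\mathbb{P}^1$. With that repair the final paragraph goes through as written: the finite (hence, by miracle flatness over the smooth base, flat) map has total degree $|\LRyb|=\iota(S)$, so each component maps with degree $1$ and the map is a local isomorphism.
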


%\begin{remark}
We note that in general, a morphism of real algebraic curves $C \to D$, inducing a covering map on real points, may have trivial \emph{real} monodromy but be algebraically nontrivial (i.e., not be a local isomorphism). Corollary \ref{cor:w=id} shows that this cannot occur for Schubert curves.
%\end{remark}

%\begin{corollary}
%  Let $\fix(\omega)=\{T\in \LRyb:\omega T=T\}$.  Then $$2 \cdot |K(\gamma^c/\alpha; \beta)|\ge |\LRyb|-|\fix(\omega)|.$$ 
%\end{corollary}

\section{Geometric constructions}\label{sec:constructions}

%%%%%%%%%%%%%%%%%%%%%%%
%%%% CONSTRUCTIONS %%%%
%%%%%%%%%%%%%%%%%%%%%%%

It is considerably easier to analyze the orbit structure of $\omega$, and, therefore, the geometric structure of the Schubert curve, using $\lesh$. As examples, we give two families of triples $(\alpha,\beta,\gamma)$ for which the Schubert curve $S(\alpha, \beta, \gamma)$ exhibits `extremal' numerical and geometrical properties.  See \cite{bib:GillespieLevinson} for full proofs, which rely on the relative simplicity of $\lesh$.

\begin{example}[Schubert curves of high genus]
Let $t \geq 2$ be a positive integer. Let
\[
\rect = (t+2)^{t+1}; \hspace{0.7cm}
\alpha = \gamma = (t,t-1,t-2,\ldots,2,1); \hspace{0.7cm}
\beta = (t+1,2,1^{t-2})
\]
so $\gamma^c/\alpha$ is a \emph{staircase-ribbon}. Then $\omega$ has \emph{only one orbit} on $\LRyb$. Hence, the Schubert curve $S_t \subset G(t+1,\mathbb{C}^{2t+3})$ is integral; moreover, its arithmetic genus is $g(S_t) = (t-1)(t-2)$.
\end{example}
In \cite{bib:Levinson}, the second author asked if Schubert curves are always smooth. K-theory does not in general detect singularities, but either possibility is interesting: that $S_t$ gives examples of singular Schubert curves for $t \gg 0$, or that it gives smooth Schubert curves of arbitrarily high (geometric) genus.

\begin{example}[Schubert curves with many connected components]
Let $t \geq 2$ be a positive integer. Let
\[
\rect = (t+1)^{t+1};\hspace{0.5cm}
\alpha = (t,t-1,\ldots, 2);\hspace{0.5cm}
\beta = (t,1,1); \hspace{0.5cm}
\gamma = (t+1,t,\ldots,3,2,2)
\]
Then $\omega$ \emph{acts as the identity} on $\LRyb$, which has $t-1$ elements. Consequently, the Schubert curve $S_t \subset G(t+1,\mathbb{C}^{2t+2})$ is a disjoint union of $t-1$ copies of $\mathbb{P}^1$.
\end{example}

\section{Conjectures}\label{sec:conjectures}

%%%%%%%%%%%%%%%%%%%%%
%%%% CONJECTURES %%%%
%%%%%%%%%%%%%%%%%%%%%

\begin{figure}[b]
\centering
\begin{tabular}{m{2cm} m{2cm} m{2cm} m{2cm}|c|c|c} 
\multicolumn{4}{c|}{Schubert problem} & $\mathcal{O}$ & $K_1(\mathcal{O})$ & $K_2(\mathcal{O})$ \\ \hline \vspace{0.1cm}
\multirow{3}{*}{$\alpha = {\tiny \yng(6,5,3,1)}$} &
\multirow{3}{*}{$\beta = {\tiny \yng(7,4,3,2)}$} &
\multirow{3}{*}{$\gamma = {\tiny \yng(5,5,4,2)}$} &
\multirow{3}{*}{$\rect = 6 \times 8$} &
38 & 52 & 51 \\
&&&& 23 & 31 & 28 \\
&&&& 10 & 9  & 13 \\ \hline \vspace{0.1cm}
\multirow{2}{*}{$\alpha = {\tiny \yng(2,2,1)}$} &
\multirow{2}{*}{$\beta = {\tiny \yng(3,1,1)}$} &
\multirow{2}{*}{$\gamma = {\tiny \yng(3,2)}$} &
\multirow{2}{*}{$\rect = 4 \times 4$} &
1 & 0 & 0 \\
&&&& 1 & 0 & 0
\end{tabular}
\caption{Examples of Schubert problems. For each problem, we list the size of each orbit $\mathcal{O}$ and the genomic tableaux $K_1(\mathcal{O}), K_2(\mathcal{O})$ generated in Phases 1 and 2. The second example demonstrates Corollary \ref{cor:w=id}.}
\label{fig:numerical-evidence}
\end{figure}

%Though we have given a proof of Conjecture \ref{conj:orbit-by-orbit} for Phase 1 in the case that $\beta$ has two rows, it is still open to find a proof in general, by either combinatorial or geometric means.

Numerical evidence, as in Figure \ref{fig:numerical-evidence}, suggests that the inequality \eqref{eqn:ktheory-ineq} in fact holds orbit-by-orbit, when the bijections $\varphi_1, \varphi_2$ of Theorem \ref{thm:generating-ktheory} are used to generate genomic tableaux:

\begin{conjecture} \label{conj:orbit-by-orbit}
Let $\mathcal{O} \subseteq \LRyb$ be an orbit of $\omega$. Let $K_1(\mathcal{O}), K_2(\mathcal{O})$ denote the sets of genomic tableaux occuring in this orbit in Phases 1 and 2 (via the bijections $\varphi_1, \varphi_2$). Then
\begin{equation} \label{eqn:orbit-by-orbit-ineq}
|K_i(\mathcal{O})| \geq |\mathcal{O}| - 1 \qquad (\text{for } i = 1, 2).
\end{equation}
\end{conjecture}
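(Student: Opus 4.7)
The plan is to refine the proof of Theorem \ref{thm:parity} to an orbit-by-orbit statement, and then invoke the antidiagonal symmetry for the Phase 2 version. Recall the factorization $\omega = \omega_t \cdots \omega_1$, where each $\omega_i$ is conjugate to $s_i \circ \ell_i$. By Theorem \ref{thm:ktheory-little-orbits}, every non-trivial orbit of $s_i \circ \ell_i$ is cyclic: it has the form $T_1 \to T_2 \to \cdots \to T_k \to T_1$, where each transition $T_j \to T_{j+1}$ for $j = 1, \ldots, k-1$ corresponds to a distinct Phase 1 Pieri jump and hence to a distinct genomic tableau in $K(\gamma^c/\alpha;\beta)(i)$ via $\varphi_1$. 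Using the standard decomposition of a $k$-cycle as a product of $k-1$ transpositions $(T_1\,T_2)(T_2\,T_3)\cdots(T_{k-1}\,T_k)$, each $\omega_i$ becomes a product of $|K(\gamma^c/\alpha;\beta)(i)|$ transpositions, one per Phase 1 genomic tableau.

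Concatenating these factorizations expresses $\omega$ as a product of $|\Kabg|$ transpositions $\tau_K$, indexed by $K \in K_1$. The next step is to build a multigraph $H$ on the vertex set $\LRyb$ with an edge $\{T, \tau_K(T)\}$ for each $K$. Since $\omega$ lies in the group generated by the $\tau_K$, each orbit $\mathcal{O}$ of $\omega$ is contained in a connected component of $H$, and the number of edges with both endpoints in $\mathcal{O}$ is at most $|K_1(\mathcal{O})|$ under the bijection $\varphi_1$ (tracked via the source structure of the cyclic decomposition).

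The main obstacle is to show that \textbf{each $\omega$-orbit $\mathcal{O}$ is itself connected in the subgraph induced by its vertices}, from which $|K_1(\mathcal{O})| \geq |\mathcal{O}| - 1$ follows immediately. In general, the orbits of a product of transpositions can be strictly finer than the connected components of the associated graph — transpositions escaping out of $\mathcal{O}$ and returning can cancel in the full product — so a structural argument is needed to rule this out. The most promising route is to exploit the interplay between the $s_i$-chains and the $\omega$-orbit structure: specifically, to prove that for each $i$, the preimage $(s_{i-1}\circ \cdots \circ s_1)^{-1}(\mathcal{O}) \subseteq X_i$ is a union of orbits of $s_i \circ \ell_i$. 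If this ``block structure'' holds, then the $k-1$ transpositions generating each internal $s_i\circ \ell_i$-orbit remain internal to $\mathcal{O}$ after conjugation, and together they span $\mathcal{O}$ connectedly. Verifying this structural claim is where the real combinatorics lies; it likely requires tracking how the intermediate Pieri jumps in $\lesh$ reassemble into $\omega$-orbits, and may benefit from a dual-equivalence analysis in the spirit of the proof sketch in Section \ref{sec:main-result}.

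For the Phase 2 statement $|K_2(\mathcal{O})| \geq |\mathcal{O}| - 1$, I would invoke the antidiagonal symmetry of local evacuation-shuffling, under which $\varphi_2$ on $\Kabg$ corresponds to $\varphi_1$ run on a conjugate Schubert problem (with roles of rows and columns exchanged). Once this symmetry is properly formalized, the same transposition-factorization and connectivity argument applies verbatim to Phase 2. Should the block-structure hypothesis prove too strong to establish in full generality, a fallback approach is induction on $\ell(\beta)$ via the factorization $\omega = \omega_t \cdot (\omega_{t-1} \cdots \omega_1)$: assuming the conjecture for the partial product, analyze how composing with the final factor $\omega_t$ — whose orbit structure on $\LRyb$ is explicit by Theorem \ref{thm:ktheory-little-orbits} — can merge orbits but not separate them without providing a compensating genomic tableau. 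The delicate part of such an induction is keeping track of how the merging behavior of $\omega_t$ interacts with the $\omega_{t-1} \cdots \omega_1$-orbits that are strictly larger than single $\omega$-orbits.
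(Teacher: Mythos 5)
The statement you are proving is Conjecture \ref{conj:orbit-by-orbit}, which the paper leaves \emph{open}: the authors verify it only in special cases ($\beta$ with two rows for Phase 1, two columns for Phase 2, and orbits of size at most two), so there is no proof in the paper to compare against. Your proposal is a strategy, not a proof, and you say as much yourself: the entire content of the conjecture is concentrated in the ``block structure'' claim that each $\omega$-orbit $\mathcal{O}$ is connected by transpositions whose associated genomic tableaux lie in $K_1(\mathcal{O})$. The global version (Theorem \ref{thm:parity}) is exactly what survives when this connectivity is allowed to fail, which is why the paper can prove the summed inequality \eqref{eqn:ktheory-ineq} but only conjectures the orbit-by-orbit refinement. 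Reducing the conjecture to an unproven structural hypothesis of comparable difficulty does not constitute progress that can be graded as correct.

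There is also a concrete mismatch in your bookkeeping that you should not wave away with ``tracked via the source structure.'' The operator $\omega_i = s_1\cdots s_{i-1}(s_i\ell_i)s_{i-1}\cdots s_1$ identifies $X_i$ with $X_1$ via the \emph{shuffles} $s_j$, so an edge of your multigraph $H$ coming from a Pieri jump at $U \in X_i$ joins $s_1\cdots s_{i-1}(U)$ to $s_1\cdots s_{i-1}\bigl((s_i\ell_i)(U)\bigr)$. But the genomic tableaux counted by $K_1(\mathcal{O})$ are those generated at the intermediate tableaux $\ell_{i-1}\cdots\ell_1(T)$ for $T \in \mathcal{O}$, i.e.\ the image of $\mathcal{O}$ under the \emph{evacuation-shuffle} steps, not under $(s_{i-1}\cdots s_1)^{-1}$. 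These two ways of transporting $\mathcal{O}$ into $X_i$ differ in general, so ``edges with both endpoints in $\mathcal{O}$'' and ``genomic tableaux in $K_1(\mathcal{O})$'' are not indexed by the same set of Pieri jumps; your inequality between them needs an argument that you have not supplied. The Phase 2 claim inherits both problems, plus the burden of formalizing the antidiagonal symmetry as an actual bijection of Schubert problems compatible with $\varphi_1$ and $\varphi_2$. In short: the reduction is incomplete and one of its counting steps is unjustified, and the remaining claim is precisely the open conjecture.
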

\noindent We have verified this conjecture in certain special cases:
\begin{thm}
Conjecture \ref{conj:orbit-by-orbit} holds in Phase 1 if $\beta$ has two rows, and in Phase 2 if $\beta$ has two columns.
\end{thm}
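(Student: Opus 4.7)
I would approach the proof in two stages, beginning with a reduction. First, by the antidiagonal symmetry of the $\lesh$ algorithm (illustrated in Figure \ref{fig:antidiagonal} and proved precisely in \cite{bib:GillespieLevinson}), transposing the Young diagrams and complementing the partitions $\alpha, \beta, \gamma$ appropriately interchanges Phase 1 and Phase 2 paths, preserves the $\omega$-orbit structure, and swaps the bijections $\varphi_1$ and $\varphi_2$ of Theorem \ref{thm:generating-ktheory}. Since $\beta$ has two rows iff $\beta^T$ has two columns, this reduces the Phase 2 statement for two-column $\beta$ to the Phase 1 statement for two-row $\beta$. Hence it suffices to prove the Phase 1 case for $\beta = (\beta_1, \beta_2)$.

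For this, I would use the factorization $\omega = \omega_2 \omega_1$ from the proof of Theorem \ref{thm:parity}, where each $\omega_i$ is conjugate (via jeu de taquin shuffles) to $s_i \circ \ell_i$ on $X_i$. By Theorem \ref{thm:ktheory-little-orbits}, each non-trivial $\omega_i$-orbit of size $k$ contributes exactly $k-1$ genomic tableaux in $K(\gamma^c/\alpha;\beta)(i)$, all generated by Phase 1 Pieri jumps at level $i$ via $\varphi_1$. Letting $P_i \subseteq \LRyb$ denote the set of $T$ whose level-$i$ Phase 1 step is a Pieri jump (rather than a vertical slide, and with transition step $s(T) > i$), we have
\[
|K_1(\mathcal{O})| = |\mathcal{O} \cap P_1| + |\mathcal{O} \cap P_2|,
\]
and the desired inequality $|K_1(\mathcal{O})| \geq |\mathcal{O}| - 1$ is equivalent to $|N(\mathcal{O})| \leq |D(\mathcal{O})| + 1$, where $N(\mathcal{O}) = \mathcal{O} \setminus (P_1 \cup P_2)$ and $D(\mathcal{O}) = \mathcal{O} \cap P_1 \cap P_2$. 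Elements of $N(\mathcal{O})$ are exactly those $T$ with $s(T) = 1$ (Phase 2 begins immediately) or whose Phase 1 moves are all vertical slides.

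The main combinatorial step is to produce an injection $N(\mathcal{O}) \setminus \{T_0\} \hookrightarrow D(\mathcal{O})$ for some canonical ``exceptional'' element $T_0 \in N(\mathcal{O})$. To each other $T \in N(\mathcal{O})$, I would associate a compensating element $T' \in D(\mathcal{O})$ by traversing $\omega$-arrows within the orbit cycle, using the orbit structure of $\omega_1$ and $\omega_2$ (Theorem \ref{thm:ktheory-little-orbits}) together with the semistandard and ballot invariants of Lemma \ref{lem:pieri-jumps-yamanouchi} to track how a ``missed'' Pieri jump at $T$ is picked up downstream. The main obstacle is constructing this injection: the interaction of $\omega_1$ and $\omega_2$ on a single $\omega$-orbit is subtle, since an $\omega_i$-orbit need not sit inside a single $\omega$-orbit and fixed-versus-nonfixed behavior of $\omega_i$ at $T$ can mix in intricate ways along the orbit cycle. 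The two-row hypothesis keeps the case analysis feasible --- each $T$ contributes at most two Phase 1 moves and $s(T) \in \{1,2,3\}$ yields a bounded case tree --- but the identification of each compensating $T'$ still requires a delicate comparison of the Pieri/vertical/Phase 2 structure of $T$ and its successive $\omega$-images. Once the injection is built, the orbit-level inequality follows by counting.
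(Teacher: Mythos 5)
This extended abstract states the theorem without proof (it is deferred to the full paper), so there is no in-source argument to compare against; judged on its own terms, your proposal has a genuine gap at its center. The reduction of $|K_1(\mathcal{O})| \geq |\mathcal{O}|-1$ to the claim $|N(\mathcal{O})| \leq |D(\mathcal{O})|+1$ --- where $N(\mathcal{O})$ consists of orbit elements whose Phase 1 moves are all slides and $D(\mathcal{O})$ of those with Pieri jumps at both levels --- is correct bookkeeping, and it is the right way to frame the problem. But the proof then ends exactly where the theorem begins: you say an injection $N(\mathcal{O})\setminus\{T_0\}\hookrightarrow D(\mathcal{O})$ should be built by ``traversing $\omega$-arrows'' and that identifying each compensating element ``still requires a delicate comparison.'' No rule is given for which element of $D(\mathcal{O})$ is assigned to a given $T\in N(\mathcal{O})$, which $T_0$ is exceptional, or why the assignment is well-defined and injective. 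You yourself identify the obstruction that makes this nontrivial: the orbits of $s_i\circ\ell_i$ in Theorem \ref{thm:ktheory-little-orbits} do not respect $\omega$-orbits, so the global count cannot simply be localized --- this is precisely why the statement is a conjecture in general and a theorem only under the two-row hypothesis. The content of the theorem is the construction you have not supplied.

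The preliminary reduction also needs more care. The ``antidiagonal symmetry'' of Figure \ref{fig:antidiagonal} is a statement about a single evacu-shuffle path (the Phase 1 path is a vertical strip, the Phase 2 path a horizontal strip), and the symmetry realizing it is a $180^\circ$ rotation with complementation of entries, which exchanges $\alpha$ and $\gamma$ but leaves $\beta$ a partition with the same number of rows --- it does not transpose $\beta$. A transposition symmetry of the whole problem does exist (via $G(k,n)\cong G(n-k,n)$ duality, which identifies the orbit structures and the sets of genomic tableaux), but to use it as you propose one must additionally prove that the induced bijection on $\LRyb$ carries Phase 1 Pieri jumps at level $i$ to Phase 2 conjugate-Pieri jumps and intertwines $\varphi_1$ with $\varphi_2$; none of this follows from anything stated here. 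So either that intertwining must be established, or the two-column Phase 2 case must be argued directly.
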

\noindent Also, it follows easily from Proposition \ref{prop:fixed-points} that \eqref{eqn:orbit-by-orbit-ineq} holds for orbits of size two (and is an equality for fixed points of $\omega$).

Finally, although we have only defined \emph{local} evacuation-shuffling for Littlewood-Richardson tableaux, the evacuation-shuffle $\esh$ is defined on \emph{all} tableaux $(\ybox,T)$ as the conjugation of shuffling by rectification. Our results do yield local algorithms for certain other Knuth classes of tableaux %, such as \emph{lowest}-weight semistandard tableaux, 
via straightforward alterations to $\lesh$. %(For lowest-weight semistandard tableaux, the local algorithm resembles a rotated version of $\lesh^{-1}$.) 
It would be interesting to understand the actions of $\esh$ and $\omega$ on semistandard tableaux in general, and to extend the connection to K-theoretic Schubert calculus. To be precise:

\begin{conjecture}
Let $T$ be \textbf{any} (semi)standard skew tableau and $\ybox$ an inner co-corner of $T$. There exists a local algorithm for computing $\esh(\ybox,T)$, which does not require rectifying the tableau, such that:
\begin{itemize}
\item[(i)] Each step consists of exchanging the $\ybox$ with an entry of $T$, of weakly increasing value.
\item[(ii)] The Knuth equivalence class of the word of $T$ (omitting $\ybox$) is preserved throughout the algorithm.
\item[(iii)] The algorithm specializes to jeu de taquin (if $T$ is of straight shape) and $\lesh$ (if $T$ is ballot).
\end{itemize}
Each step should correspond (by conjugating with rectification) to a jeu de taquin slide of $\ybox$ through the rectification $\rectify(\ybox,T)$. \end{conjecture}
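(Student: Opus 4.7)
The plan is to leverage the dual equivalence framework underpinning the proof of Theorem \ref{thm:MainResult1}, extended beyond the ballot (LR) case. Recall that jeu de taquin and promotion both commute with standardization, so the first reduction is to the standard case: construct the local algorithm for standard $(\ybox, T)$ and destandardize to recover the semistandard version. In the standard setting, Knuth equivalence classes of skew tableaux coincide with dual equivalence classes, which are indexed by the rectification shape, making the dual equivalence machinery directly applicable.

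My approach would be to construct the local algorithm inductively, guided by the promotion path in $\rectify(\ybox \sqcup T)$. Given the current tableau and the position of $\ybox$, I would determine, from the local structure of $T$ near $\ybox$, the next entry with which to swap, so that the swap in $T$ corresponds, after rectification, to a single elementary step of the promotion of $\rectify(\ybox \sqcup T)$. For the ballot case, this matches precisely the $\pieri$, $\vertical$, $\cpieri$, and $\horiz$ moves of $\lesh$; for the straight-shape case, it matches ordinary jeu de taquin. For general Knuth classes, I would use Haiman's elementary dual equivalences (three-letter swaps) to interpolate between arbitrary representatives of a class and a distinguished ``LR-like'' representative, transporting the local rule along the way. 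Verification that this construction actually computes $\esh$ would proceed along the same lines as the $s$-decomposition argument in Section \ref{sec:main-result}: decomposing $\rectify(T)$ into strips guided by the promotion path, and matching the local swaps in $T$ to the strip-by-strip motions of $\ybox$ in the rectification.

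The main obstacle is clause (i): each step should be a single exchange with a single entry of weakly increasing value. A priori, mimicking a promotion step of the rectification may require several swaps in $T$, and the monotonicity constraint may fail. I expect this to be resolved by a refined two-phase structure generalizing the Phase~1 / Phase~2 split of $\lesh$, together with an antidiagonal symmetry ensuring the swaps can be ordered to be individually meaningful. Clause (ii) should follow because each local swap corresponds, by construction, to a jeu de taquin slide in $\rectify(T)$ and hence preserves rectification shape, and clause (iii) would hold by design since the straight-shape and ballot specializations would be built in as base cases. A secondary difficulty is that for non-ballot tableaux, the monodromy interpretation is lost, so any inductive step would need a purely combinatorial criterion for correctness rather than a geometric crosscheck; this will likely require developing the K-theoretic and crystal-theoretic aspects of $\esh$ in parallel with the combinatorial construction.
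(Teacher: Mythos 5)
This statement is presented in the paper as an open conjecture: the paper contains no proof of it, and Section \ref{sec:conjectures} offers it precisely because the authors do not know how to establish it beyond the ballot case (handled by Theorem \ref{thm:MainResult1}) and the straight-shape case (ordinary jeu de taquin). Your proposal is a reasonable research program, but it is not a proof, and it should not be presented as one. The decisive gap is exactly the one you flag yourself: clause (i), the existence of a rule in which every step is a \emph{single} exchange of $\ybox$ with a single entry of weakly increasing value, localizable from the skew tableau without rectifying. Saying you ``expect this to be resolved by a refined two-phase structure \dots together with an antidiagonal symmetry'' is restating the conjecture, not proving it; the entire content of the statement is that such a structure exists for arbitrary Knuth classes, and nothing in your sketch produces it.

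Two further points would need repair even as a program. First, your claim that ``in the standard setting, Knuth equivalence classes of skew tableaux coincide with dual equivalence classes'' is false: Knuth (jeu de taquin) classes are indexed by the rectified \emph{tableau}, while dual equivalence classes of a fixed skew shape are complementary to them --- by Haiman's theory a Knuth class and a dual equivalence class of the same shape meet in exactly one tableau. The paper's proof of Theorem \ref{thm:main-theorem} exploits this transversality (using dual equivalence to replace inward by outward rectification); it does not identify the two notions. Second, ``transporting the local rule along elementary dual equivalences'' from an LR-like representative to an arbitrary one is not automatically well-defined: you would need to show the transported rule is independent of the chosen chain of elementary moves, and that it still satisfies the single-swap and monotonicity constraints of clause (i) after transport. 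Neither is addressed. As written, the proposal correctly identifies the relevant machinery ($s$-decompositions, dual equivalence, standardization) but leaves the conjecture exactly as open as the paper does.
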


For a straight-shape tableau $T$ that is \emph{not} highest-weight, %the (evacuation-)shuffle path of the $\ybox$ is just the path given by jeu de taquin slides through $T$. It
it would be interesting to find an analog of the $s$-decomposition to describe the path of the $\ybox$, and to use it to give a local algorithm on any skew tableau $T'$ whose rectification is $T$. Finally, we ask how to compute $\esh(S,T)$ locally, where both $S$ and $T$ may have more than one box.

\acknowledgements
\label{sec:ack}
We thank David Speyer, Oliver Pechenik, and Mark Haiman for many helpful conversations.

\end{document}